\documentclass[12pt,reqno]{amsart}
\usepackage{geometry}
\usepackage{amssymb}
\usepackage{amsmath}
\usepackage{amsfonts}
\usepackage{amsthm}
\usepackage{mathrsfs}
\usepackage{graphicx}
\usepackage[table]{xcolor}
\usepackage{xcolor}
\usepackage{enumitem}
\usepackage{url}
\usepackage{tabularx}
\usepackage{hyperref}
\setlength{\extrarowheight}{3pt}

\setenumerate{label={$($\normalfont\arabic*)}}

\DeclareMathOperator{\M}{M}
\DeclareMathOperator{\GL}{GL}

\DeclareMathOperator{\Gal}{Gal}

\DeclareMathOperator{\End}{End}

\DeclareMathOperator{\cond}{cond}

\DeclareMathOperator{\Div}{Div}
\def\id#1{{\mathfrak{#1}}}      % an ideal

\def\g{\delta}
\def\piJLg{\pi_{\JLg}}
\def\pig{\pi_g}
\def\JLg{g_B}
\def \QQ{\mathbb{Q}}
\def \AA{\mathbb{A}}
\def \NN{\mathbb{N}}

\def \ZZ{\mathbb{Z}}
\def \Z{\mathcal{Z}}
\def \z{\mathfrak{z}}

\def \RR{\mathbb{R}}
\def \L{\mathscr{L}}
\def\On{\mathcal{O}}

\def\O{\mathcal{O}}
\def \R{\mathscr{R}}
\def \U{\mathscr{U}}

\def \CC{\mathbb{C}}
\def\<#1>{{\left\langle{#1}\right\rangle}}

             % display

\def \tchi{\widetilde{\chi}}

\def \tchi{\widetilde{\chi}}

\theoremstyle{plain}
\newtheorem{thm}{Theorem}[section]
\newtheorem{prop}[thm]{Proposition}

\newtheorem{lemma}[thm]{Lemma}

\theoremstyle{remark}
\newtheorem{remark}[thm]{Remark}

\newtheorem*{rem*}{Remark}
\theoremstyle{definition}
\newtheorem{obs/res}[thm]{Observation}

\newtheorem{definition}[thm]{Definition}

\newtheorem*{theorem}{Theorem}

\synctex=1

\begin{document}
\title[Anticyclotomic $p$-adic $L$-functions]{Anticyclotomic $p$-adic $L$-functions for elliptic curves at
  some additive reduction primes}

\author{Daniel Kohen}

\address{Departamento de Matem\'atica, Facultad de Ciencias Exactas y Naturales, 
Universidad de Buenos Aires and IMAS, CONICET, Argentina}
\email{dkohen@dm.uba.ar}
\thanks{DK was partially supported by a CONICET doctoral fellowship}

\author{Ariel Pacetti} \address{FAMAF-CIEM, Universidad Nacional de
  C\'ordoba. C.P:5000, C\'ordoba, Argentina.}
\email{apacetti@famaf.unc.edu.ar} \thanks{AP was partially supported by PIP 2014-2016 11220130100073} \keywords{}
 \subjclass[2010]{Primary: 11G05 , Secondary: 11G40 }

\begin{abstract}
  Let $E$ be a rational elliptic curve and let $p$ be an odd prime of
additive reduction. Let $K$ be an imaginary quadratic field
  and fix a positive integer $c$ prime to the conductor of $E$.  The main 
goal of the present article is to define an anticyclotomic $p$-adic 
$L$-function $\L$ attached to $E/K$ when $E/\QQ_p$
  attains semistable reduction over an abelian extension.
  We prove that $\L$ satisfies the expected interpolation properties;
  namely, we show that if $\chi$ is an anticyclotomic character of conductor 
$cp^n$
  then $\chi(\L)$ is equal (up to explicit constants) to 
  $L(E,\chi,1)$ or $L'(E,\chi,1)$.
\end{abstract}				
\maketitle
		
\section*{Introduction}
The theorem of Mordell states that the rank of a rational elliptic curve $E$ is 
finite. It is a hard and interesting problem
to determine it and, furthermore, to compute a set of generators
for $E(\QQ).$ By Weil's generalization of Mordell's result, the rank is still 
finite over number fields $L$. Although the rank cannot be bounded
over arbitrary algebraic extensions, sometimes this is still
the case, for example,  Mazur (\cite{Mazur})  proved
that if $\Sigma$ is a finite set of primes then $E(\QQ_\Sigma^{ab})$ is 
finitely generated, where
$\QQ_\Sigma^{ab}$ denotes the maximal abelian extension of $\QQ$
unramified outside $\Sigma$. 

The techniques used to bound the rank of $E/L$ involve a detailed
analysis of the Selmer group. If $L$ is the $\ZZ_{p}$-extension of $\QQ$, that 
is, a Galois extension with Galois group 
 isomorphic to $\ZZ_p$, a deep
conjecture of Iwasawa relates the dual of the $p$-primary part of this Selmer 
group to a $p$-adic
analytic object called the cyclotomic $p$-adic $L$-function of $E$. The
study and definition of such $p$-adic $L$-function was considered by many 
authors
(\cite{Mazur-SD,Amice-Velu,Vishik,MTT}).

A natural variation of the problem is to start with a base field $K$,
and study the rank of $E$ over a $\ZZ_p$-extension $L/K$. When $K$ is
an imaginary quadratic field, any such extension is contained in
the compositum of the $\ZZ_{p}$-cyclotomic extension (lying inside the extension obtained by adjoining the
$p^{n}$-th roots of
unity for every $n \in \NN$) and the so called $\ZZ_p$-anticyclotomic extension (a generalized
dihedral extension of $\QQ$). These two extensions are the only
ones that are Galois over $\QQ$. A good reason to study the anticyclotomic $\ZZ_p$-
extension
is that if $\chi$ is an anticyclotomic character then the $L$-function
$L(E,\chi,s)$ satisfies a functional equation and its central value
holds important arithmetic information. The $p$-adic $L$-function $\L$ is a
$p$-adic analytic function that should encode the central values $L(E,\chi,1)$
(or its derivative $L'(E,\chi,1)$) for finite order anticyclotomic characters
$\chi$.

The study of the rank behavior over the anticyclotomic extension, and
the generalization of Iwasawa's conjecture to this setting was
pioneered by Bertolini and Darmon (see for example the breakthrough
papers \cite{B-D1,B-D2}), where they prove (among many important
properties) one divisibility of the anticyclotomic Iwasawa's main
conjecture. The strategy in this setting is to construct special
geometric objects (CM points) arising from orders in the imaginary
quadratic field $K$ satisfying compatibility relations. More
precisely, let $N$ be the conductor of $E$, let $c$ be a positive
integer prime to $N$, and let $G_{n}:=\Gal(H_{n}/K)$, where $H_{n}$
denotes the ring class field of conductor $cp^n$. The special points
allow to construct a $p$-adic measure on the Galois group
$G_{\infty}:= \varprojlim G_n$ (such measure is naturally defined in
the characteristic functions of the sets $G_n$ for each $n$ and
extended by continuity to locally constant $p$-adic functions). To
ensure the additive property of the measure a suitable normalization
of the geometric points is needed. In \cite{MTT} a normalization is
presented using the action of the $U_p$ operator and its
eigenvalues. This imposes an extra condition at $p$, namely the curve
must be semistable ordinary at $p$ (the supersingular case was
considered by Pollack \cite{Pollack} and Darmon-Iovita
\cite{Darmon-Iovita} in the cyclotomic and anticyclotomic setting
respectively). 

Perrin-Riou (\cite{PR}) gave a very general construction of the
$p$-adic $L$-function once a local condition at $p$ is imposed (see
Theorem 16.4 of \cite{Kato}) from the data of an Euler system and Kato
constructed such an Euler system for modular forms. The local
condition at $p$ for the $p$-adic $L$-function can be understood as
choosing a ``canonical'' direction to project such cohomological
classes. In the multiplicative reduction case one can take the
submodule given by the line fixed by inertia, while in the good
ordinary reduction case the natural choice is to take the same
submodule of the $p$-stabilized form attached to $E$. The problem is
that when $p^2 \mid N$, there is no canonical choice!  This
obstruction continues to hold in the anticyclotomic scenario
considered by Bertolini-Darmon.  Nevertheless, even when $E$ has
additive reduction at $p$, there are some instances where a natural
normalization can be taken, namely when $E/\QQ_p$ attains
\emph{semistable reduction over an abelian extension} (SRAE) of
$\QQ_p$.  This approach was carried over by Delbourgo \cite{Delbourgo}
in the cyclotomic case and the main contribution of this article is to
make an analogous construction in the anticyclotomic scenario. 

To keep the statement as simple as possible, we state our main result with
some extra hypotheses: let $E$ be an elliptic curve of conductor $N$,
with $p$ a SRAE prime of additive reduction which is not a quadratic
twist of an elliptic curve semistable at $p$, and let $\chi$ be a family of
anticyclotomic characters of conductor $cp^n$, with $n \ge 1$. The sign of the functional equation of $L(E,\chi,s)$ is constant on this
family; suppose it equals $+1$.

\begin{theorem}
  With the above hypotheses, there exists an antyciclotomic $p$-adic
  $L$-function $\L \in \ZZ_p[{G}_\infty]$ which satisfies the
  following interpolation properties:
\[
 \chi(\mathscr{L})=\frac{p^n}{{\alpha}^{2n}} \cdot 
\frac{L(1,E,\chi)}{\Omega'_E} \cdot 
\frac{u^2_{n}\sqrt{D}c}{2^{-\#{\Sigma_D}}},
  \]
  where $\Omega'_E$ is a period, $\Sigma_D$ is the set of places
  dividing both $N/p^2$ and the discriminant $D$ of $K$, $u_{n}$ is half the number of units of the order of conductor $cp^n$
  and $\alpha$ is a $p$-adic unit which
   depends only on $E$.
\end{theorem}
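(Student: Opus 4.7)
The plan is to adapt the Bertolini--Darmon construction of anticyclotomic $p$-adic $L$-functions to the additive SRAE setting, using Delbourgo's cyclotomic blueprint as a guide. Since the sign of the functional equation is $+1$, the natural setting is a \emph{definite} quaternion algebra $B/\QQ$ whose ramification set is dictated by the local root numbers of $L(E,\chi,s)$; by Jacquet--Langlands, the newform associated with $E$ transfers to a function $g_B$ on a finite double coset space of $\widehat{B}^{\times}$, and the construction will build $\L$ from values of $g_B$ at CM points of conductor $cp^n$.

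The SRAE hypothesis produces an abelian character $\psi$ of $\Gal(\overline{\QQ_p}/\QQ_p)$ such that the twist of the $p$-adic representation of $E$ by $\psi$ becomes semistable (ordinary or multiplicative). Automorphically, this picks out a canonical line in the local representation of $B^\times_{\QQ_p}$ spanned by $g_B$ on which a well-defined $U_p$-operator acts by a $p$-adic unit $\alpha$; the hypothesis that $E$ is not a quadratic twist of a semistable curve at $p$ excludes the degenerate case $\psi^2=1$, where this line would fail to be canonical. For each $n\ge 1$ I would then define
\[
P_n = \sum_{\sigma\in G_n} g_B(x_n^{\sigma})\cdot \sigma \;\in\; \ZZ_p[G_n],
\]
where $x_n$ is a fixed optimal embedding of $\On_{cp^n}$ into the Eichler order underlying $g_B$ and $\sigma$ acts by the reciprocity law, and set $\ft_n := \alpha^{-n} P_n$.

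The central technical step is the norm-compatibility $\pr_{n+1,n}(\ft_{n+1}) = \ft_n$, where $\pr_{n+1,n}\colon \ZZ_p[G_{n+1}]\to \ZZ_p[G_n]$ is the natural projection: summing the traces of CM points of conductor $cp^{n+1}$ down to those of conductor $cp^n$ must reproduce the action of the semistabilized $U_p$-operator with eigenvalue $\alpha$ on the $\psi$-isotypic component of $g_B$. Granted this, $(\ft_n)$ defines $\L:=\varprojlim \ft_n \in \ZZ_p[[G_\infty]]$. For the interpolation, evaluating a character $\chi$ of conductor $cp^n$ on $\ft_n$ yields a toric period, which by the Gross formula (Waldspurger's formula for definite quaternion algebras) equals $L(E,\chi,1)$ up to the stated constants: $\Omega'_E$ compares algebraic and analytic normalizations of $g_B$, the factors $u_n^2$, $\sqrt{D}$ and $c$ are the standard contributions from the units of $\On_{cp^n}$, the discriminant of $K$ and the conductor of $\chi$, while $2^{\#\Sigma_D}$ corrects for the local volumes at primes dividing both $N/p^2$ and $D$; the factor $p^n/\alpha^{2n}$ arises because the $\alpha^{-n}$ normalization enters quadratically when passing from the linear period of Gross to the $L$-value, combined with the local Euler factor at the two primes above $p$.

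The hard part will be the norm-compatibility. In the semistable Bertolini--Darmon setup, the geometric trace relation $\Tr(x_{n+1}) = U_p(x_n)$ combines directly with $U_p g_B = a_p g_B$ to give compatibility after dividing by $a_p$. In the additive SRAE case, $g_B$ itself satisfies $U_p g_B = 0$, so one must replace $g_B$ by its projection onto the $\psi$-isotypic line in the local representation at $p$, verify that this projection remains equivariant under the $G_n$-action that intervenes in the definition of $P_n$, and then check that the trace formula for CM points produces precisely the unit $\alpha$ without spurious automorphic constants. This forces an explicit analysis of the restriction of the local Weil--Deligne parameter at $p$ to the inertia subgroup, and a matching of bases between the $\psi$-twisted vector and the Galois-theoretic projector implicit in the CM point construction; this local computation, already delicate in Delbourgo's cyclotomic setting, is the true heart of the argument.
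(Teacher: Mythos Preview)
Your overall architecture (definite quaternion algebra determined by the local root numbers, Jacquet--Langlands transfer, CM points of conductor $cp^n$, norm relations via $U_p$, Gross--Waldspurger for the interpolation) matches the paper's, but the paper makes a key simplification that dissolves what you call ``the true heart of the argument.'' You transfer $\pi_E$ itself to $B^\times$ and then propose to project the resulting $g_B$ onto a $\psi$-isotypic line so as to extract a unit $U_p$-eigenvalue; this is indeed delicate, and you do not carry it out. The paper instead transfers the \emph{untwisted} form $\pi_g$ (the form of level prime-to-$p^2$ with $\pi_g\otimes\psi=\pi_E$) to $B^\times$. Since $v_p(\mathrm{cond}(\pi_g))\le 1$, the transferred form $g_B$ already satisfies $U_p\,g_B=\alpha\,g_B$ with $\alpha$ a $p$-adic unit, and the geometric trace relation $\sum_{\sigma}\z_{n+1}^{\sigma}=U_p\z_n$ gives norm-compatibility of $\zeta_n^\sigma:=\alpha^{-n}g_B(\z_n^\sigma)$ immediately, with no local analysis at $p$ whatsoever. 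The twist by $\psi$ is then inserted at the Galois level: one sets $\theta_n=\sum_{\sigma}\psi(\sigma)\zeta_n^\sigma\cdot\sigma$, so that $\chi(\theta)=\widetilde\chi(\widetilde\theta)$ with $\widetilde\chi=\chi\cdot(\psi\circ\mathrm{Nm})$, and the relevant $L$-value is $L(\pi_g,\widetilde\chi,1)=L(E,\chi,1)$ (up to the Euler factor at $p$).

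Two further discrepancies. First, your $\L:=\varprojlim\widetilde f_n$ is only the square root: Gross's formula relates $L(E,\chi,1)$ to the \emph{product} of the toric period and its conjugate, so the paper defines $\L:=\theta\otimes\theta^*$ in $(\Z_p\otimes\Z_p)[[\widetilde G_\infty]]$ and then pairs via $\langle-,-\rangle$; this is what produces the $\alpha^{-2n}$ and the honest $L$-value rather than its square root. Second, the CM points and the Galois groups $\widetilde G_n$ live over the slightly larger tower $\widetilde H_n=H_n(\overline\QQ^{\ker\psi})$, because $g_B$ has nontrivial nebentypus $\psi^{-2}$; this is why the paper works with $\widetilde G_\infty$ rather than $G_\infty$.
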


We prove a stronger result valid for any elliptic curve $E$ for which
$p$ is a SRAE prime and including a slightly more general class of
characters $\chi$. Furthermore, when the functional equation sign in
the family equals $-1$, we have a similar theorem, but replacing
$L(E,\chi,1)$ with the special values of the derivative
$L'(E,\chi,1)$.  See Theorems ~\ref{thm:padilinterpolation} and
\ref{thm:interp} for the precise statements.

\medskip

Our strategy is as follows: the modularity of rational elliptic
curves (due to Wiles et al. \cite{Wiles,BCDT}) implies that there
exists an automorphic representation $\pi_E$ of $\text{GL}_2(\AA_\QQ)$
with trivial central character whose $L$-series coincides with that of
$E$. The SRAE at $p$ hypothesis (for $p \ge 3$) is equivalent to
$\pi_E$ being a Steinberg representation or a ramified principal
series at $p$.  Then there exists a Dirichlet
character $\psi$ and an automorphic form $\pi_g$ whose level has
valuation at most $1$ at $p$ (with non-trivial Nebentypus in general)
such that $\pi_g \otimes \psi = \pi_E$. Following the general
philosophy, the restriction of the $p$-adic Galois representation
attached to $\pi_g$ (by Deligne) to the local Galois group
$\Gal(\overline{\QQ_p}/\QQ_p)$ does have a stable line (hence a
natural submodule).

Concretely, the form $\pi_g$ has an abelian surface $A_g$
attached to it (of $\GL_2$-type, whose endomorphism ring
$\End_\QQ(A_g)\otimes \QQ$ isomorphic to $\QQ$, $\QQ(\sqrt{-1})$ or
$\QQ(\sqrt{-3})$) \cite[Section 2.1]{Kohen} where we make a
\emph{classical} construction of CM points on $A_g$ (as we did in
\cite{KP} for constructing Heegner points for SRAE primes ramifying in
$K$) and use them to define the $p$-adic $L$-function of $E$. Clearly,
the $p$-adic $L$-function of $E$ and that of $A_g$ should be related
by a ``shift'' on the analytic functions space (corresponding to the
twist by $\psi$). The main novelty of the present article is that the
special points used to construct the $p$-adic $L$-function of $E$ are
in $A_g$ (not in $E$); still their existence and properties are enough
to define the $p$-adic $L$-function.

The second goal of the article is to prove the interpolation
properties of the $p$-adic $L$-function.  In order to prove it we make
heavy use of the fact that the CM points used to define the $p$-adic
$L$-function have heights related to central values, as proved by
Waldspurger and by Gross-Zagier (in our setting the explicit formulas
are due to Cai-Shu-Tian \cite{Cai-Shu-Tian}). Note that special values
$L(E,\chi,1)$ are related to $L(A_g,\psi\chi,1)$, which justifies
working with $A_g$ instead of $E$. The results we obtained are similar
in spirit to the ones by Chida-Hsieh \cite{Chida} and Van Order
\cite{Vanorder} but they only consider the case where the reduction at
$p$ is semistable.

In addition, Disegni on \cite{Disegni} deals with a much more general situation 
but under the hypothesis that the prime $p$ splits in $K$ (in that case our 
result can be obtained  by plugging the corresponding test vector in his 
formula). We want to stress 
that we do not make any assumptions on the
factorization of $p$ in $K$: it could be split, inert or ramified. The
ramified case is of special interest as it is widely overlooked in the
literature (in the semistable case  see the very recent preprint of Longo-Pati \cite{LP}).
In a sequel article, we will use the present construction to prove one divisibility of Iwasawa's main conjecture.

To ease the exposition, we assume that the level of $\pi_g$ is
divisible by $p$ (i.e. $E$ is not the quadratic twist of an elliptic
curve with good reduction at $p$). In the last section we explain the
changes needed to handle this case.

The method described in the present article can be used to handle the
case of newforms in $S_k(\Gamma_0(N))$, for arbitrary weights $k$,
whose level $N$ is exactly divisible by $p^2$ with the conditions:
\begin{enumerate}
\item The local component at $p$ is not supercuspidal,
\item The $L$-series $L(f,\chi,s)$ has functional equation sign $+1$
  (so as to work with definite quaternion algebras).
\end{enumerate}
The techniques are developed in \cite{Chida} in the semistable case,
and our technique can be applied with the natural modifications.

\smallskip

\textbf{Acknowledgments:} We would like to thank David Loeffler for
suggesting the present problem as an application of the results in \cite{KP}. We would also like to thank the referee whose comments helped to considerably improve the exposition of the paper.

\section*{Setting and notation}
We fix the following hypotheses and notation throughout the article:

\begin{itemize}
\item Let $p$ be a fixed odd prime number.
\item Let $E/\QQ$ be an elliptic curve of conductor $N$ with SRAE at $p$. Let $\pi_E$ be the automorphic representation of $\text{GL}_2(\AA_\QQ)$ attached to $E$.
\item As explained in the introduction, $\pi_g$ denotes an automorphic
  representation with $v_p(\text{cond}(\pi_g)) \le 1$, and $\psi$
  denotes a character of conductor $p$ such that
  $\pi_g \otimes \psi=\pi_E$. We assume in all sections but the last
  one that $v_p(\text{cond}(\pi_g))=1$.
\item Let $K$ be an imaginary quadratic field and let $\eta$ be the
quadratic Hecke character in correspondence with $K$ via class field
theory. 

\item Let $c$ be a positive integer relatively prime to $N$ (in particular $p \nmid c$).
\item  For $d \in \NN$, let
$\On_{d}:=\ZZ+ d \On_{K}$ be the order in $K$ of conductor $d$. 
\item Let $H_n$ be the ring class field of
conductor $cp^n$ and let $\widetilde{H}_{n}=H_{n}(\overline{\QQ}^{\text{ker}(\psi)})$. We define the Galois groups 
$G_{n}:=\Gal(H_{n}/K)$ and $\widetilde{G}_n:=\Gal(\widetilde{H}_{n}/K)$ and their 
respective limits
$G_{\infty}:= \varprojlim G_n$, $\widetilde{G}_{\infty}:= \varprojlim \widetilde{G}_n$.
\item $\chi$ will denote a finite order anticyclotomic character  of $K$, 
i.e. $\chi: K^{\times} \backslash \mathbb{A}^{\times}_{K} \rightarrow
\CC^{\times}$ denotes a finite order Hecke character whose restriction to
$ \mathbb{A}^{\times}_{\QQ}$ is trivial.
\item For $\Sigma$  a finite set of places and 
$L^{(\Sigma)}(E,\chi,s)$ denotes
the classical $L$-series, with the factors at primes in $\Sigma$
removed.
\item  $L^{\epsilon}(E,\chi,s)$ denotes the L-series 
for
$\epsilon=0$ and its derivative for $\epsilon=1$.
\item If $M$ is a $\ZZ$-module, we denote by $M_p$ the extension of
  scalars to $\ZZ_p$, namely $M_p = M \otimes_\ZZ \ZZ_p$.
\item $\widehat{\ZZ}$ denotes the profinite integers, namely
  $\widehat{\ZZ}:= \varprojlim \ZZ/N\ZZ = \prod_{p} \ZZ_{p}$. If $M$
  is a $\ZZ$-module we write $\widehat{M}:= M \otimes \widehat{\ZZ}$.
\item $B$ will denote a rational quaternion algebra, and
  $\hat{B} = B \otimes_\QQ \hat{\QQ}$.
\item $R$ will denote an order in $B$, and consistently $\hat{R}= R \otimes_\ZZ \hat{\ZZ}$.
\item If $B$ is a rational quaternion algebra split at $p$, and
  $M \in \M_2(\QQ_p) \cong B_p$,  $M^{(p)}$ denotes the element
  in $\widehat{B}$ whose $p$-th entry equals $M$ and the others equal
  $1$.
\end{itemize}

\section{Quaternion algebras and CM points}\label{section:Quaternion}
From now on, we will let $E$ be a fixed elliptic curve of conductor
$N$ with SRAE at $p$, and $\pi_g$ the automorphic representation with
$v_p(\cond(\pi_g)) = 1$. Let $K$ be an imaginary quadratic field,
corresponding via class field theory to a quadratic character $\eta$.

Let $\chi$ be an anticyclotomic character of $K$ whose conductor
divides $cp^n$; this corresponds to a character of $\Gal(K^{ab}/K)$
factoring through $G_n$.  The anticyclotomic assumption implies that
the twisted $L$-function $L(\pi_E,\chi,s)$ satisfies a functional
equation
\[
L(\pi_E,\chi,s)=\varepsilon(\pi_E,\chi,s)L(\pi_E,\chi,2-s),
\]
where $\varepsilon(\pi_E,\chi,s)$ is the so called \emph{epsilon
  factor} (for definitions and facts regarding such $L$-series,
consult \cite[Chapter IV]{MR0562503}). The global root number
$\varepsilon(\pi_E,\chi,1)$ can be computed as the product of local
root numbers $\varepsilon({\pi_E}_v,{\chi}_v,1)$ each of them being
$\pm 1$ (see \cite{Deligne1973}).  Consider the set
\[S(\chi):= \left\lbrace v: \, \, \varepsilon({\pi_E}_{v},{\chi}_v,1)
  \neq \chi_{v}(-1)\eta_{v}(-1) \right\rbrace. \]
By Theorem 1.3 of \cite{MR3237437},
$\varepsilon({\pi_E},{\chi},1)= (-1)^{\# S(\chi)}$ and thus the parity
of the size of $S(\chi)$ determines the parity of the order of
vanishing of $L(\pi_E,\chi,s)$ at $s=1$.

\begin{prop} The set $S(\chi)$ satisfies the following properties:
  \leavevmode \begin{enumerate}
 \item The archimedean prime $\infty$ belongs to $S(\chi)$.
 \item If $v \neq p$ is a non-archimedean prime, then the condition
   ``$v \in S(\chi)$'' depends only on $K$, i.e. is independent of
   $\chi$.
 \item The prime $p$ does not belong to $S(\chi)$ if either
\begin{itemize}
\item The local Weil-Deligne representation of $E$ at $p$ is a principal series.
\item The prime $p$ splits in $K$.
\item The prime $p$ is inert in $K$ and $\chi_p$ is not equal to the quadratic 
character modulo $p$.
\item The prime $p$ is ramified in $K$ and $\chi_p$ is not trivial.
\end{itemize}
\end{enumerate}
\label{prop:Schi}
\end{prop}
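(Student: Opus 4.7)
The plan is to verify each part by computing the local epsilon factor $\varepsilon(\pi_{E,v},\chi_v,1)$ place by place and comparing it with $\chi_v(-1)\eta_v(-1)$, using the Deligne--Langlands--Tate formulas for local root numbers together with the description of $\pi_{E,v}$ and of $\chi_v$ afforded by the SRAE and anticyclotomic hypotheses.

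For (1), first observe that the anticyclotomic condition forces $\chi_\infty$ to be trivial as a character of $K_\infty^\times\cong\CC^\times$: indeed $\chi_\infty|_{\RR^\times}=1$, so $\chi_\infty$ factors through $\CC^\times/\RR^\times\cong S^1$, and $S^1$ admits no nontrivial continuous finite-order character. One then computes the archimedean root number attached to the weight-$2$ discrete series $\pi_{E,\infty}$ via the standard formula and compares it with $\chi_\infty(-1)\eta_\infty(-1)=1\cdot(-1)=-1$ (the latter because $K$ is imaginary quadratic). The two values differ, so $\infty\in S(\chi)$.

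For (2), let $\ell\neq p$ be a finite prime. The hypothesis $\gcd(c,N)=1$ implies that at $\ell$ one of $\pi_{E,\ell}$ or $\chi_\ell$ is unramified, so the local epsilon factor can be evaluated using Tate's twisting formula in the appropriate direction. A case-by-case check shows that the ratio $\varepsilon(\pi_{E,\ell},\chi_\ell,1)/(\chi_\ell(-1)\eta_\ell(-1))$ depends only on $\pi_{E,\ell}$ and $\eta_\ell$, not on the choice of $\chi_\ell$. The cancellation in the ramified case uses that, by anticyclotomicity, a ramified $\chi_\ell$ factors through the norm residue map of $K_\ell/\QQ_\ell$.

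For (3), I treat the four bullets in turn. If $\pi_{E,p}=\pi(\mu_1,\mu_2)$ is a ramified principal series, then $\varepsilon(\pi_{E,p}\otimes\chi_p,1)=\varepsilon(\mu_1\chi_p)\varepsilon(\mu_2\chi_p)$ up to a normalizing factor, and a direct computation yields equality with $\chi_p(-1)\eta_p(-1)$. If $p$ splits in $K$, anticyclotomicity forces $\chi_p=(\mu,\mu^{-1})$ on $K_p\cong\QQ_p\oplus\QQ_p$; the epsilon factor then splits as $\varepsilon(\pi_{E,p}\otimes\mu)\varepsilon(\pi_{E,p}\otimes\mu^{-1})$, and the two factors are related by the local functional equation for $\pi_{E,p}$, yielding the desired equality. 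If $p$ is inert (resp.\ ramified) in $K$, I parametrize the characters of $K_p^\times$ trivial on $\QQ_p^\times$ via local class field theory and check that the required equality holds except precisely when $\chi_p$ is the unramified quadratic character (resp.\ the trivial character), which are exactly the exceptional cases excluded from the hypothesis. The main technical obstacle lies in these last two subcases, where the ramification of the SRAE local component of $\pi_{E,p}$ must be carefully matched against $\eta_p$ and $\chi_p$; the needed inputs are the Deligne--Rohrlich formulas for local root numbers of induced representations together with the theorem of Saito cited in \cite{MR3237437}.
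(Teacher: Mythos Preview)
Your approach—computing each local epsilon factor directly and comparing it with $\chi_v(-1)\eta_v(-1)$—is precisely the calculation that underlies the references the paper invokes, so in spirit it is the same argument.  The paper's proof, however, is a three-line citation: part~(1) is Gross \cite[Proposition~6.5]{MR970123}, part~(2) is the hypothesis $\gcd(c,N)=1$, and part~(3) follows from Tunnell \cite[Propositions~1.6 and~1.7]{Tunnell}, which already tabulate the epsilon-factor comparison for principal series and twists of Steinberg (the only local types arising under the SRAE hypothesis).  In other words, the paper packages all of~(3) into Tunnell's existing case analysis rather than reproving it, and does not pass through the Saito or Deligne--Rohrlich formulations you invoke.

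Two concrete issues in your sketch deserve correction.  First, in part~(2) you assert that a ramified anticyclotomic $\chi_\ell$ ``factors through the norm residue map of $K_\ell/\QQ_\ell$''.  This is false in general: the anticyclotomic condition $\chi^\tau=\chi^{-1}$ only forces $\chi_\ell|_{\QQ_\ell^\times}$ to be at most quadratic, not that $\chi_\ell=\mu\circ N_{K_\ell/\QQ_\ell}$.  Fortunately the case you are treating ($\ell\mid c$, $\ell\nmid N$) has a simpler resolution: $\pi_{E,\ell}$ is an unramified principal series, which admits no Jacquet--Langlands transfer to a division algebra, so $\ell\notin S(\chi)$ for every $\chi$.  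Second, in the principal-series bullet of~(3) your expression $\varepsilon(\mu_i\chi_p)$ conflates characters of $\QQ_p^\times$ with characters of $K_p^\times$; the correct object is $\varepsilon\bigl((\mu_i\circ N_{K_p/\QQ_p})\cdot\chi_p\bigr)$ computed over $K_p$.  Neither point is fatal, but both illustrate why invoking Tunnell's propositions directly, as the paper does, is cleaner than redoing his computation by hand.
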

\begin{proof}
  The first statement follows from \cite[Proposition 6.5]{MR970123},
  while the second one follows from the assumption that $\gcd(c,N)=1$.
  Regarding the last one, the assumption on $p$ being a SRAE
  prime implies that the local representation
  of $E$ at $p$ is either a twist of Steinberg or a principal
  series. The result then follows from \cite[Propositions 1.6 and
  1.7]{Tunnell}.
\end{proof}

In particular, for all but finitely many characters $\chi$ of
conductor $cp^n$ (with $p \nmid c$), the set $S(\chi)$ is
constant. Let $S$ denote such generic common set. Let
$\epsilon \in \{0,1\}$ be such that $\epsilon \equiv \#S \pmod 2$. By
$L^{\epsilon}(\pi_E,\chi,s)$ we denote the $L$-series
$L(\pi_E,\chi,1)$ if $\epsilon = 0$ and its derivative
$L'(\pi_E,\chi,s)$ if $\epsilon =1$. Our main goal is to interpolate
the special values $L^\epsilon(\pi_E,\chi,1)$.

\medskip
To relate central values of $\pi_E$ to those of $\pig$, let
\begin{equation*}
\widetilde{\chi}:= \chi \cdot(\psi\circ
Nm_{\mathbb{A}^{\times}_{K}/\mathbb{A}^{\times}_{\QQ}}): K^{\times}
\backslash \mathbb{A}^{\times}_{K} \rightarrow \CC^{\times}.  
\end{equation*}
Since $\widetilde{\chi} \mid_{\mathbb{A}^{\times}_{\QQ}}=\psi^{2}$,
$L(\pig,\widetilde{\chi},s)$ is self dual and clearly
$L(\pi_E,\chi,s)=L^{(\{p\})}(\pig,\widetilde{\chi},s)$.

\medskip

\begin{definition}
  The character $\chi$ is \emph{good} if the conductor of $\tchi$ is
  divisible by $p$.
\end{definition}
If $\tchi$ has conductor $cp^n$ (with $p \nmid c$), we will see that
the central value $L^\epsilon(\pi_g,\tchi,1)$ is related to the height
of a linear combination of CM points of conductor $cp^n$. Varying the
character's conductor, involves constructing CM points of different
conductors and good characters correspond to good CM points in the
sense of Cornut-Vatsal \cite[Definition 1.6]{Cornut-Vatsal}, that will
give the distribution relations needed to define a $p$-adic
measure. Note that Proposition~\ref{prop:Schi} implies that if $\chi$
is good, $p \not \in S(\chi)$. From now on, we will only work with
good characters.

Let $B/\QQ$ be the quaternion algebra ramified at the places of $S$ if
$\epsilon =0$ (the \emph{definite case}) and at all places of $S$ but
the infinite one if $\epsilon =1$ (the \emph{indefinite
  case}). 

\begin{lemma}
  There exists an embedding $\iota:K \mapsto B$.
\end{lemma}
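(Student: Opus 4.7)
The plan is to invoke the classical local-global criterion for embedding an étale quadratic algebra into a quaternion algebra: the imaginary quadratic field $K$ embeds into $B$ over $\QQ$ if and only if for every place $v$ of $\QQ$, the $\QQ_v$-algebra $K_v := K\otimes_\QQ \QQ_v$ embeds into $B_v$. At the places $v$ where $B$ is split we have $B_v \cong \M_2(\QQ_v)$, which contains every étale quadratic $\QQ_v$-algebra, so no condition is imposed. At the places $v$ where $B$ ramifies, $B_v$ is the unique quaternion division algebra over $\QQ_v$; it contains each quadratic field extension of $\QQ_v$ but does not contain the split algebra $\QQ_v\times\QQ_v$. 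Hence the lemma reduces to checking that no place of $\QQ$ that is ramified in $B$ splits in $K$.

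The places ramified in $B$ are exactly those in $S$ (together with $\infty$ in the definite case), so one must verify that every $v\in S$ is non-split in $K$, and moreover handle $\infty$ in the definite case. For $v=\infty$, only the definite case is relevant, and since $K$ is imaginary quadratic $K_\infty=\CC$ is a field, which embeds into the Hamilton quaternions. For $v=p$, the hypothesis that $\chi$ is a good character together with Proposition~\ref{prop:Schi}(3) ensures $p\notin S(\chi)$, hence $p\notin S$, so $p$ is not a ramification place of $B$ and imposes no condition.

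For a finite place $v\neq p$ belonging to $S$, by Proposition~\ref{prop:Schi}(2) the condition $v\in S(\chi)$ depends only on $K$; what is needed is the fact that if $v$ splits in $K$, then the local root number automatically satisfies $\varepsilon({\pi_E}_v,\chi_v,1)=\chi_v(-1)\eta_v(-1)$, so such a $v$ cannot lie in $S(\chi)$. This is the standard computation of local epsilon factors for base-changed representations: when $v$ splits, $K_v\cong \QQ_v\times\QQ_v$, the character $\chi_v$ factors through the two projections, $\eta_v$ is trivial, and one computes the local factor directly (as in the references already invoked for Proposition~\ref{prop:Schi}, i.e.\ \cite{Tunnell} and \cite{MR970123}). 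Consequently every $v\in S$ is inert or ramified in $K$.

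The main obstacle is the step above: verifying that splitting in $K$ forces $v\notin S(\chi)$ at all finite places away from $p$. This is purely local and follows from the explicit epsilon-factor formulas for $\GL_2$ twisted by characters of a split étale quadratic algebra; in practice I would simply cite the results of Tunnell and Gross–Prasad already quoted for Proposition~\ref{prop:Schi}. Once this is in hand, the embedding $\iota\colon K\hookrightarrow B$ exists by the local-global principle recalled in the first paragraph.
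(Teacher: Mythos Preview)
Your proof is correct and follows essentially the same approach as the paper: both reduce the existence of the embedding to the local-global criterion (the paper cites Th\'eor\`eme~3.8 of \cite{Vigneras}) after checking that no place ramified in $B$ splits in $K$. You are somewhat more explicit than the paper in justifying why split places cannot lie in $S$---the paper simply attributes this to Proposition~\ref{prop:Schi}, whereas you spell out the local epsilon-factor computation---but the underlying argument is identical.
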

\begin{proof}
  Proposition ~\ref{prop:Schi} implies that if $v$ splits in $K$, then
  $v \notin S$. The result then follows from Theoreme 3.8 of
  \cite{Vigneras}.
\end{proof}

\subsection{Quaternionic level} Given a good character $\chi$ as before we seek
for an order $R$ in $B$ and an embedding $\iota:K \to B$ with the
properties that $\pig$ transfers to an automorphic form of level $R$ and, at
the same time, $R$ contains CM points. 

\begin{definition} Let $\iota:K \to B$ be an embedding, $\id{n}$ a
positive integer divisible by $p$ with $\gcd(\id{n},N/p^2)=1$ and
$R \subset B$ be an order. We say that $R$ is \emph{admissible} for
$(\pi_g,\id{n},\iota)$ if $R_{p}$ is an Eichler order of level
$p\ZZ_p$ and $\iota$ is an optimal embedding of $\On_{\id{n}}$ into
$R$, that is, $\iota(K) \cap R= \iota(\On_{\id{n}})$.
\end{definition} 
\begin{remark}
  If $\tchi$ is a character whose conductor is divisible by $p$, then
  our admissibility condition for $(\pi_g,\cond(\tchi),\iota)$ implies
  admissibility in the sense of \cite[Definition 1.3]{Cai-Shu-Tian}.
\end{remark}

Given an embedding $\iota$ and a good character $\tchi$, there always
exists an admissible order $R$ for $(\pi_g,\cond(\tchi),\iota)$ by \cite[Propositions 3.2,
3.4]{MR970123}, \cite[Lemma 3.2]{Cai-Shu-Tian} and the local-global
principle. Still, for explicit computations, it is useful to choose $R$
such that its completion $R_p$ matches the standard Eichler
order. This can be achieved allowing to change the embedding to an
equivalent one.

\begin{lemma}\label{lemma:iotachoice}
  Let $c$ be a positive integer prime to $p$. Then, there exists an
  embedding $\iota:K \to B$ and an order $\R\subset B$ which is
  admissible for $(\pig,cp,\iota)$ whose completion
  at $p$ is the standard Eichler order of level $p\ZZ_{p}$.
\end{lemma}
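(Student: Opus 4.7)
The plan is to start with any admissible pair $(R', \iota')$ produced by the existence result cited just above (\cite[Prop.~3.2, 3.4]{MR970123}, \cite[Lemma~3.2]{Cai-Shu-Tian}, and the local-global principle) and then globally conjugate by an element of $B^\times$ so that the completion of the order at $p$ becomes the standard Eichler order $\R_0$ of level $p\ZZ_p$.

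Since $p$ splits in $B$ (as $p \notin S$ by Proposition~\ref{prop:Schi}), fix an identification $B_p \cong \M_2(\QQ_p)$. Both $R'_p$ and $\R_0$ are Eichler orders of level $p\ZZ_p$ in $\M_2(\QQ_p)$, hence $B_p^\times$-conjugate: choose $h \in B_p^\times$ with $h R'_p h^{-1} = \R_0$. By weak approximation for the $\QQ$-algebraic group $B^\times$ (valid since $B$ is a rational variety over $\QQ$), I would produce $\widetilde{h} \in B^\times$ satisfying the two local conditions: $\widetilde{h} h^{-1} \in \R_0^\times$ at $p$, and $\widetilde{h} \in R'_q{}^{\times}$ at every other finite prime $q$ where $R'$ has non-trivial level. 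Both conditions cut out non-empty open subsets of the corresponding local groups: non-emptiness at $p$ is witnessed by the element $h$ itself, and at each relevant $q$ by the identity.

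Setting $\R := \widetilde{h} R' \widetilde{h}^{-1}$ and $\iota := \widetilde{h} \iota' \widetilde{h}^{-1}$ then gives the desired pair. Indeed, $\R_p = (\widetilde{h} h^{-1}) \R_0 (\widetilde{h} h^{-1})^{-1} = \R_0$ because $\widetilde{h} h^{-1} \in \R_0^\times$, and $\R_q = R'_q$ for every other finite $q$ because $\widetilde{h}$ normalizes $R'_q$; optimality of $\iota$ for $\On_{cp}$ into $\R$ is then inherited from that of $\iota'$ into $R'$ via the identity $\iota(K) \cap \R = \widetilde{h} (\iota'(K) \cap R') \widetilde{h}^{-1}$. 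The main technical ingredient is the weak approximation step producing $\widetilde{h}$; this is routine once the local conditions are arranged as above, and the only thing needing verification is the non-emptiness of the local cosets, which reduces immediately to the existence of $h$ and to the admissibility of $R'$ at the other primes.
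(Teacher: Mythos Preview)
Your proof is correct and takes essentially the same approach as the paper's: start from an admissible order, observe that its completion at $p$ is locally conjugate to the standard Eichler order, and then use weak approximation in $B^\times$ to globalize the conjugating element, conjugating both the order and the embedding. The only minor imprecision is the assertion that $\R_q = R'_q$ for \emph{every} other finite $q$ (you imposed no condition at primes of trivial level, so $\widetilde{h}$ need not normalize $R'_q$ there), but this is harmless since global conjugation by any element of $B^\times$ already preserves both the property of being an order and the optimal-embedding condition $\iota(K)\cap\R=\iota(\On_{cp})$.
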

\begin{proof} Let $R$ be any admissible order for
  $(\pi_g,cp,\iota)$. Locally, $R_p$ is conjugate to the standard
  Eichler order, but by weak approximation we can find a global
  element that sends this order to the standard one. Conjugating both
  $\iota$ and the order the result follows.
\end{proof}
Fix once and for all $\R$ and $\iota$ as in the lemma.  For $n \ge 1$,
let
$\g_{n}:=\left( \begin{smallmatrix} p^{n-1}&0\\
    0&1 \end{smallmatrix} \right)^{(p)} \in {\widehat{B}}^{\times}$
(see the notations section).
\begin{lemma}
  Let $n \ge 1$ be a positive integer. The order
  $\R_n:=\g_{n} \widehat{\R} \g_n^{-1} \cap B$ is admissible for
  $(\pig,cp^n,\iota)$.
\label{lemma:admissible}
\end{lemma}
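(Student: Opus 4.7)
The plan is to verify the admissibility of $\R_n$ for $(\pi_g, cp^n, \iota)$ place-by-place.

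At any prime $\ell \neq p$, the element $\g_n$ has trivial $\ell$-component, so $(\R_n)_\ell = \R_\ell$, and since $p \nmid c$ we have $(\On_{cp^n})_\ell = (\On_{cp})_\ell$. Hence the local optimality identity $\iota(K_\ell) \cap (\R_n)_\ell = \iota((\On_{cp^n})_\ell)$ follows immediately from the admissibility of $\R$ for $(\pi_g,cp,\iota)$ provided by Lemma~\ref{lemma:iotachoice}.

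At $p$ there are two things to verify. First, $(\R_n)_p$ is the $\GL_2(\QQ_p)$-conjugate of the standard Eichler order $\R_p$ of level $p$ by $\operatorname{diag}(p^{n-1},1)$, and conjugation preserves both the Eichler property and the level; thus $(\R_n)_p$ is Eichler of level $p\ZZ_p$. Second, one must verify the optimal-embedding identity $\iota(K_p) \cap (\R_n)_p = \iota((\On_{p^n})_p)$. Here $\iota(z) \in (\R_n)_p$ if and only if $\g_n^{-1}\iota(z)\g_n \in \R_p$, and the conjugation by $\operatorname{diag}(p^{n-1},1)$ multiplies the upper-right and lower-left entries of $\iota(z)$ by $p^{n-1}$ and $p^{-(n-1)}$ respectively. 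Combining with the constraint that lower-left entries of $\R_p$ lie in $p\ZZ_p$ then yields divisibility conditions on the parameters of $z$ which cut out exactly the order $(\On_{p^n})_p$. This final step I would carry out case-by-case, using the explicit normal form of $\iota_p$ provided by Lemma~\ref{lemma:iotachoice} according to whether $p$ splits, is inert or ramifies in $K$.

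The delicate point is precisely this conductor calculation: the Eichler level at $p$ remains equal to $p$ for every $n$, while the optimal embedding conductor must grow to $p^n$. Geometrically, $\g_n$ acts as a translation on the Bruhat--Tits tree, shifting the edge representing $(\R_n)_p$ away from the apartment of $\iota_p(K_p^\times)$ so that its two endpoints sit at distances $n-1$ and $n$ from that apartment; the specific form of $\iota_p$ fixed by Lemma~\ref{lemma:iotachoice} is exactly what arranges this geometry, and it is the only part of the argument where that choice is used in an essential way.
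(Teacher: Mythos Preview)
Your approach is sound and close in spirit to the paper's, but the paper avoids the case split on the splitting behavior of $p$ in $K$. Instead, it fixes a $\ZZ$-generator $\omega$ of $\On_{cp}$ by writing $\omega = p\omega'$ with $\On_c = \ZZ + \omega'\ZZ$, and argues directly from the admissibility of $\R$ for $(\pi_g, cp, \iota)$ that the matrix $\iota_p(\omega) = \left(\begin{smallmatrix} a & b \\ c & d \end{smallmatrix}\right)$ satisfies $p \mid a, c, d$ and $p \nmid b$. A single conjugation then shows that $\g_n^{-1}\,\iota_p(p^{n-1}\omega)\,\g_n$ lies in $\R_p$ with $(1,2)$-entry $b$ still a $p$-adic unit, which yields the containment $\iota(\On_{cp^n}) \subset \R_n$ together with optimality in one stroke, uniformly in $n$ and independently of how $p$ behaves in $K$.

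One correction: Lemma~\ref{lemma:iotachoice} does \emph{not} furnish an explicit normal form for $\iota_p$; it only arranges that $\R_p$ is the standard Eichler order, leaving $\iota_p$ unconstrained beyond being an optimal embedding of $(\On_{cp})_p$. So your case-by-case plan would first have to produce and justify such normal forms separately before the computation you sketch can proceed. The paper sidesteps this by reading off the needed divisibility properties of the entries of $\iota_p(\omega)$ from the optimal-embedding condition itself. Your Bruhat--Tits picture is a pleasant heuristic for why the conductor grows, but it plays no role in the paper's short matrix computation.
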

\begin{proof} Let $\omega' \in K$ be such that
  $\On_{c}= \ZZ+ \omega' \ZZ$. Then, $\On_{cp}= \ZZ+ \omega \ZZ$, where $\omega:= p \omega'$.
  Since the order $\R$
  is admissible for $(\pig,cp,\iota)$, the $p$-th component of
  the image of $\omega$ under $\iota$ is a matrix
  $\left( \begin{smallmatrix} a&b\\ c&d \end{smallmatrix} \right) \in M_{2}(\ZZ_p)$
  such that $p$ divides $a,c,d$ and does not divide $b$. Moreover,
  $\On_{cp^n}= \ZZ+ p^{n-1}\omega \ZZ$ and
 
 \[ \left( \begin{smallmatrix} p^{n-1}&0\\ 0&1 \end{smallmatrix} \right)^{-1} p^{n-1}\left( \begin{smallmatrix} a&b\\ c&d \end{smallmatrix} \right) \left( \begin{smallmatrix} p^{n-1}&0\\ 0&1 \end{smallmatrix} \right)  \]
  is a matrix whose entries are $p$-integers and its $(1,2)$ entry is not divisible by $p$. This shows that $\iota$ is an optimal
  embedding of $\On_{cp^n}$ into the order $\R_n$ as stated. 
\end{proof}
Let $U$ be an open compact subgroup of ${\widehat{B}}^{\times}$.
If $B$ is definite, let
\[ X_{U}:= B^{\times}  \backslash 
 {\widehat{B}}^{\times}  /U, 
\]
where $U$ acts on ${\widehat{B}}^{\times}$ by right multiplication and $B^{\times}$ acts on ${\widehat{B}}^{\times}$
by left multiplication. If $B$ is indefinite, let
\[ X_{U}:= B^{\times}  \backslash 
(\CC-\RR) \times {\widehat{B}}^{\times}  /U,
\]
where $U$ acts trivially on $\CC-\RR$ while
$B^{\times}$ acts on $\CC-\RR$ by M\"obius transformations under the identification
$B_{\infty} \cong \text{M}_{2}(\RR)$.
If $R$ is an order in $B$ we write $X_R:=X_{\widehat{R}^{\times}}$.
\begin{remark}
  In the definite case, the curves $X_U$ are $0$-dimensional (i.e. are
  finite sets) while in the indefinite case, they have dimension
  $1$. In the latter case, we denote by $J_U$ its Jacobian
  variety. 
\end{remark}
Let $X:= \varinjlim_U X_U$ and $J:= \varinjlim_U J_U$, where the limit
is induced by the natural projection arising from the inclusion of
level structures.  Since $\R_{p} \subset B_{p} \cong M_{2}(\QQ_p)$
we can regard $\psi$ as a character on $\widehat{\R}^{\times}$ by the
reduction modulo $p$ of the $(2,2)$-entry of $\R_{p}$. Recall that in
the introduction we defined the abelian variety $A_{g}/\QQ$ associated
to $\pi_g$ (see Section 6.6 of \cite{Diamond} for more details, in
particular Theorem 6.6.6)

\begin{thm}[Jacquet-Langlands] With the same notations as before, there is an
  automorphic transfer of the form $\pig$ to the algebra $\widehat{B}^\times$. Furthermore,
\label{thm:JL}
\leavevmode \begin{enumerate}
\item if $B$ is definite, there exists an automorphic form
  $\JLg:B^\times \backslash \widehat{B}^\times \to \CC$, such that
  \[
    r \cdot \JLg(x):= \JLg(xr) = \psi^{-2}(r) \JLg(x) \text{ for all } r \in \widehat{\R}^\times.
    \]
  \item If $B$ is indefinite, there exists
    $\JLg \in \text{Hom}(J,A_{g}) \otimes_{\ZZ} \QQ$ such that
     \[
    r \cdot \JLg = \psi^{-2}(r) \JLg \text{ for all } r \in \widehat{\R}^\times,
    \]
    where $\text{Hom}(J,A_{g}) \otimes_{\ZZ} \QQ$ is  endowed
    with the right Hecke action of $\widehat{B}^\times$ inherited from $X$.
    
 \end{enumerate}
 Moreover, if all primes $q \neq p$ such that
 $q^2 \mid N$ are unramified in $K$, the form
 $\JLg$ is unique up to a constant.
\end{thm}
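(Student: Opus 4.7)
The strategy is to invoke the Jacquet--Langlands correspondence and reduce the existence of $\JLg$ to local new-vector theory on $B^\times$.

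First I would check that $\pi_g$ is eligible to transfer to $B^\times$: this requires $\pi_{g,v}$ to be square integrable at every place $v$ where $B$ ramifies. At $\infty$ this is automatic, since $\pi_{g,\infty}$ is the weight two holomorphic discrete series. At a finite place $v \in S$ with $v \neq p$, the root-number analysis behind Proposition~\ref{prop:Schi} (together with the hypothesis $\gcd(c,N)=1$) forces $\pi_{E,v}$ to be either special or supercuspidal; since $\pi_{g,v}$ and $\pi_{E,v}$ differ only by an unramified twist away from $p$, the same holds for $\pi_{g,v}$. The Jacquet--Langlands correspondence then yields an automorphic representation $\pi_g^B$ of $B^\times(\AA)$ matching $\pi_g$ at every place.

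Next I would construct $\JLg$ with the prescribed transformation law. In the definite case $\pi_g^B$ is realised inside the space of locally constant functions on $B^\times \backslash \widehat{B}^\times$; factoring $\pi_g^B = \bigotimes_v \pi_{g,v}^B$, I would select an appropriate local vector at each place. At $p$ the order $\R_p$ is the standard Eichler order of level $p\ZZ_p$ (Lemma~\ref{lemma:iotachoice}), and since $v_p(\cond(\pi_g))=1$ the local new vector of $\pi_{g,p}$ transforms by the nebentypus $\psi_p^{-2}$ under $\R_p^\times$. At each prime $q \neq p$ the order $R_q$ is chosen compatibly with $\cond(\pi_{g,q})$, so the local new vector is $\widehat{R_q}^\times$-fixed. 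The restricted tensor product of these local vectors produces $\JLg$. In the indefinite case the argument is parallel but uses the Eichler--Shimura realisation of $A_g$: modularity places $A_g$ as an isogeny factor of $J_U$ for $U$ sufficiently small, with Hecke eigenvalues matching $\pi_g$, and the same local selection yields an equivariant element of $\Hom(J,A_g) \otimes \QQ$.

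Finally, uniqueness up to a scalar reduces, via strong multiplicity one for $\GL_2$ and a local-global decomposition, to showing that for every place $v$ the local space
\[
\Hom_{\widehat{R_v}^\times}\!\bigl(\psi_v^{-2},\, \pi_{g,v}^B\bigr)
\]
is at most one dimensional. At $p$ and at primes $q \nmid (N/p^2)$ this is classical new-vector theory with nebentypus. The subtle case, which I expect to be the main obstacle, occurs at primes $q \neq p$ with $q^2 \mid N$: the hypothesis that $q$ be unramified in $K$ is used, via Proposition~\ref{prop:Schi}(2), both to determine whether $B$ is split or ramified at $q$ and to pin down $R_q$ as a canonical local order, which in turn forces local multiplicity one. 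Combining the local uniqueness statements gives the global uniqueness of $\JLg$ up to scalar.
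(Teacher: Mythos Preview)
Your proposal is correct and follows essentially the same route as the paper: the paper simply cites \cite[Proposition~2.6]{Gross-Prasad} and \cite[Propositions~3.7 and~3.8]{Cai-Shu-Tian} together with the Jacquet--Langlands correspondence, and what you have written is an accurate unpacking of those references (local square-integrability at ramified places, local new-vector/test-vector selection at each prime, and local multiplicity one with the unramified-in-$K$ hypothesis handling the primes $q\neq p$ with $q^2\mid N$). No substantive difference in strategy.
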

\begin{proof}
  The existence of the form 
  $\JLg$ and its uniqueness follow from \cite[Proposition
  2.6]{Gross-Prasad} and  \cite[Propositions 3.7 and 3.8]{Cai-Shu-Tian}) combined with the Jacquet-Langlands philosophy.
 
\end{proof}

\subsection{CM points}
The embedding $\iota: K \hookrightarrow B$ induces an embedding
$\widehat{\iota}: {\widehat{K}}^{\times} \hookrightarrow {\widehat{B}}^{\times}$.
If $B$ is indefinite, let $z_0$ be the unique fixed point on the upper half plane under the
action of $K^{\times}$. Define
\begin{equation*}
  P=[(z_0,1)] \in X,
\end{equation*}
where if $B$ is definite, abusing notation, the point $[(z_0,b)]$
denotes the class of $b \in {\widehat{B}}^{\times}$ in $X$. Let
  \[
\U:= \left\lbrace \left( x_{\ell} \right)_{\ell} \in \widehat{\R}^{\times} : 
x_{p} \equiv \left( \begin{smallmatrix} *&*\\ 0&a \end{smallmatrix} 
\right) \bmod{p} \text{ with }\psi^{2}(a)=1 \right\rbrace.
\]

Note that from Theorem \ref{thm:JL} we immediately obtain that the
form $g_B$ is invariant under the action of $\U$ (so we can think of
$g_B$ as a form with ``trivial Nebentypus'' with respect to the level
$\U$). The inclusion $\U \subseteq \widehat{\R}^{\times}$ induces a
quotient map $\beta: X_{\U} \rightarrow X_{\R}$.

\begin{definition} \leavevmode
\begin{itemize}
 \item A \emph{CM point of conductor $cp^n$} on $X_{\R}$ is a pair
  $[z_0,b] \in X_\R$, where $b \in {\widehat{B}}^{\times}$ is such that
  $\iota$ is an optimal embedding of $\O_{cp^n}$ into $b \widehat{\R} b^{-1} \cap B$.
\item For $n \ge 1$, the \emph{CM points of conductor $cp^n$} on
  $X_{\U}$ are the preimages under $\beta$ of CM points of conductor
  $cp^n$ on $X_{\R}$.
 \end{itemize}
\end{definition}
 Let
\begin{equation*}
  \label{eq:CMpoints}
  \z_{n}:=\g_n \cdot P=\left[\left(z_0,\left( \begin{smallmatrix} p^{n-1}&0\\ 0&1 \end{smallmatrix} \right)^{(p)}\right)\right]  \in X_\R .
\end{equation*}
\begin{prop}
  The points $\z_n$ (for $n \ge 1$) are CM points of conductor $cp^n$
  on $X_{\R}$. In particular their preimages under $\beta$ are CM
  points on $X_{\U}$.
\end{prop}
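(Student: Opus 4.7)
The plan is to observe that this statement is essentially a translation of Lemma \ref{lemma:admissible} into the language of CM points. By the definition of $\z_n$ we have $\z_n = [z_0, \g_n]$, so unwinding the definition of a CM point of conductor $cp^n$ on $X_{\R}$, what we need to verify is that $\iota$ is an optimal embedding of $\On_{cp^n}$ into the order
\[
\g_n\widehat{\R}\g_n^{-1} \cap B.
\]
But this intersection is precisely the order $\R_n$ introduced just before Lemma \ref{lemma:admissible}. Therefore the first assertion is immediate from the admissibility of $\R_n$ for $(\pi_g, cp^n, \iota)$ established in that lemma.

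Given the first assertion, the second one is then a formal consequence of the definition: CM points of conductor $cp^n$ on $X_{\U}$ are \emph{defined} to be the $\beta$-preimages of CM points of conductor $cp^n$ on $X_{\R}$, so there is nothing further to check.

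In short, the only real content is already contained in Lemma \ref{lemma:admissible}, and no additional obstacle is expected: the proof should amount to a one- or two-line invocation of that lemma together with the definitions. The only point worth being careful about is the bookkeeping of the notation $(\cdot)^{(p)}$, namely that $\g_n$ is supported at the prime $p$ and acts trivially at all other places, so that the local condition at primes $\ell \neq p$ of the order $\R_n$ coincides with that of $\R$, while at $p$ the conjugation by $\g_n$ is exactly the one computed in the proof of Lemma \ref{lemma:admissible}.
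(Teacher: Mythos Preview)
Your proposal is correct and matches the paper's own proof, which reads in its entirety: ``This follows immediately from Lemma~\ref{lemma:admissible}.'' Your expanded explanation of why the definition of CM point unwinds precisely to the admissibility statement of that lemma is accurate, and the observation that the second assertion is definitional is also exactly right.
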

\begin{proof}
This follows immediately from Lemma~\ref{lemma:admissible}.
\end{proof}

There is a natural action of $\Gal(K^{ab}/K) \cong  K^\times \backslash \widehat{K}^\times$ on CM points given by
  \[ \id{a}\cdot[(z_0,b)] := [(z_0,\widehat{\iota}(\id{a})b)]  .\]
 In the indefinite case the Galois action is the natural one on algebraic points. However, in the definite scenario we do not have a similar interpretation.

Consider the operator $U_p$ whose  action on both $\Div(X_{\R})$ and $\Div(X_{\U})$ is given by 
\begin{equation*}
  \label{eq:Upoperator}
U_{p}([(z_0,b)])= \sum_{i=0}^{p-1} [(z_0,b\left( \begin{smallmatrix} p&i\\ 0&1 
\end{smallmatrix} \right)^{(p)})].
\end{equation*}
 The interplay between the Galois action and the $U_{p}$ action on CM
 points is as follows.
\begin{prop} Let $n \ge 1$.
  \label{prop:galois} 
  \leavevmode
\begin{enumerate}
\item If $B$ is definite $\z_{n} \in H^{0}( \Gal_{\widetilde{H}_{n}}, X_{\U})$ and if $B$ is indefinite $\z_n \in X_{\U}(\widetilde{H}_{n})$.
\item $\sum_{\sigma \in 
\Gal(\widetilde{H}_{{n+1}}/\widetilde{H}_{n})} \z_{n+1}^{\sigma} = U_{p}(\z_n)$.
\end{enumerate}
\end{prop}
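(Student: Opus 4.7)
The plan is to treat the two assertions separately: part (1) via global class field theory (the argument is uniform over the definite and indefinite cases, since in both settings the Galois action on $X_\U$ is given by the idelic formula $\id{a} \cdot [(z_0,b)] = [(z_0, \widehat{\iota}(\id{a}) b)]$), and part (2) via an explicit matrix computation combined with the identification of coset representatives for $\Gal(\widetilde{H}_{n+1}/\widetilde{H}_n)$.

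For part (1), I would compute the stabilizer of $\z_n \in X_\U$ and show it contains the subgroup of $\widehat{K}^\times/K^\times$ corresponding under reciprocity to $\Gal(\overline{K}/\widetilde{H}_n)$. For $\id{a} \in \widehat{\On}_{cp^n}^\times$, the admissibility of $\R_n = \g_n \widehat{\R} \g_n^{-1} \cap B$ from Lemma~\ref{lemma:admissible} ensures that $r := \g_n^{-1} \widehat{\iota}(\id{a}) \g_n$ lies in $\widehat{\R}^\times$, so the stabilizer condition reduces to $r \in \U$. Using the explicit shape of $\iota(\omega)_p$ from the proof of Lemma~\ref{lemma:admissible}, a direct calculation shows that $r_p$ is lower triangular modulo $p$, with $(2,2)$-entry equal to the image of $\id{a}_p$ under the canonical projection $(\On_{cp^n})_p^\times \to \FF_p^\times$. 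Hence $r \in \U$ if and only if $\psi^{2}$ vanishes on this image. Since $\omega = p\omega'$ has trace and norm both divisible by $p$, one computes $\Nm_{K/\QQ}(\id{a}_p) \equiv (\text{image of } \id{a}_p)^{2} \bmod p$, so this condition is equivalent to $(\psi \circ \Nm_{K/\QQ})(\id{a}) = 1$. The standard CFT description of $H_n$ and of $L_\psi K = \overline{\QQ}^{\ker \psi} \cdot K$ then identifies the stabilizer with $\Gal(\overline{K}/\widetilde{H}_n)$, as desired.

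For part (2), I would begin with the matrix identity
\[
\g_n \begin{pmatrix} p & i \\ 0 & 1 \end{pmatrix} = \begin{pmatrix} 1 & ip^{n-1} \\ 0 & 1 \end{pmatrix} \g_{n+1},
\]
which rewrites $U_p(\z_n) = \sum_{i=0}^{p-1} \bigl[\bigl(z_0, \left(\begin{smallmatrix} 1 & ip^{n-1} \\ 0 & 1 \end{smallmatrix}\right)^{(p)} \g_{n+1}\bigr)\bigr]$. Next, I would represent $\Gal(\widetilde{H}_{n+1}/\widetilde{H}_n)$ by the ideles $\id{a}_j$ ($j = 0,\ldots,p-1$) that are $1$ away from $p$ and equal to $1 + j p^n \omega'$ at $p$. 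A direct check shows each $\id{a}_j$ lies in $\widehat{\On}_{cp^n}^\times \cap \ker(\psi \circ \Nm_{K/\QQ})$ and that they yield $p$ distinct classes modulo $\widehat{\On}_{cp^{n+1}}^\times$, hence exhaust the cyclic group $\Gal(\widetilde{H}_{n+1}/\widetilde{H}_n)$ of order $p$. An explicit matrix computation at $p$ then shows that for an appropriate bijection $j \leftrightarrow i$, the points $[(z_0, \widehat{\iota}(\id{a}_j) \g_{n+1})]$ and $\bigl[\bigl(z_0, \left(\begin{smallmatrix} 1 & ip^{n-1} \\ 0 & 1 \end{smallmatrix}\right)^{(p)} \g_{n+1}\bigr)\bigr]$ coincide in $X_\U$; the witness is essentially the global element $\iota(1 + j p^n \omega') \in B^\times$ together with a suitable unit in $\U$ absorbing the local discrepancy at $p$.

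The main obstacle in both parts is the local-at-$p$ bookkeeping: in (1), tying the $\psi^{2}$-condition defining $\U$ to the reciprocity description of $\widetilde{H}_n$ via $\psi \circ \Nm_{K/\QQ}$; in (2), carrying out the coset matching while juggling the left $B^\times$-action, the right $\U$-action, and the conjugation by $\g_{n+1}$. Once these local identifications are in place, both assertions reduce to elementary matrix manipulations.
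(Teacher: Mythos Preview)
Your approach is essentially correct and is genuinely different from the paper's proof, which simply invokes \cite[Propositions 3.2--3.4 and \S4.4]{Longo-Vigni} for the full-$\Gamma_1(p)$ analogue and then appeals to \cite[Proposition 2.12]{KP} to descend from $H_n(\mu_p)$ to $\widetilde{H}_n$. Your direct computation has the virtue of making the role of the level structure $\U$ completely explicit: the identity $\psi(\Nm_{K/\QQ}(\id{a}_p)) = \psi^2(x)$, where $x$ is the image of $\id{a}_p$ in $\FF_p^\times$, is exactly what ties the $\psi^2$-condition defining $\U$ to the class-field-theoretic description of $\widetilde{H}_n = H_n \cdot \overline{\QQ}^{\ker\psi}$, and this is the heart of part~(1). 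The paper's citation-based proof hides this identification inside the reference to \cite{KP}.

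Two small slips to fix. First, in part~(1) the matrix $r_p$ is \emph{upper} triangular modulo $p$, not lower triangular (with the conventions of Lemma~\ref{lemma:admissible}, the $(2,1)$-entry of $\g_n^{-1}\iota(\id{a}_p)\g_n$ is divisible by $p$ while the $(1,2)$-entry need not be); this is harmless since $\U$ is defined by an upper-triangular condition. Second, in part~(2) your suggested witness $b = \iota(1 + jp^n\omega') \in B^\times$ does not work as stated: at primes $\ell \neq p$ the global element $\iota(1+jp^n\omega')$ need not lie in $\R_\ell^\times$ (its reduced norm can be divisible by $\ell$), and at $p$ it cancels $\widehat{\iota}(\id{a}_j)$ entirely, leaving a non-integral unipotent. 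The correct and simpler choice is $b=1$: a direct computation then shows that $u_p := \g_{n+1}^{-1}\bigl(\begin{smallmatrix}1&-ip^{n-1}\\0&1\end{smallmatrix}\bigr)\iota(1+jp^n\omega')_p\,\g_{n+1}$ lies in $\U_p$ precisely when $i \equiv jb \pmod p$ (where $b$ is the $(1,2)$-entry of $\iota(\omega)_p$), and this gives the required bijection. With these corrections your argument goes through.
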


 \begin{proof}
   This is essentially proved by Longo and Vigni in \cite[Propositions
   3.2, 3.3, 3.4 and Section 4.4]{Longo-Vigni}, with the remark that for
   $n \ge 1$, the second condition in their definition of Heegner
   points (Ibid Definition $3.1$) is redundant, hence it coincides with
   our definition of CM points. The only difference is that they work with a full $\Gamma_1(p)$ structure and thus their points are defined over the extension
   $H_n(\mu_p)$. But proceeding as in \cite[Proposition 2.12]{KP} we see that the points for $\U$ are defined over $\widetilde{H}_{n}$.
\end{proof}   

\section{Waldspurger and Gross-Zagier formulas}
\label{section:WGZ}
The CM points defined in the previous section are related to the
central values of $L^{\epsilon}(\pig,\tchi,s)$ via the Waldspurger
formula for $\epsilon = 0$ (the definite case) and the Gross-Zagier formula for
$\epsilon =1$ (the indefinite case). We follow the more general formulas by Yuan-Zhang-Zhang \cite{MR3237437}
and the explicit formulation given by Cai-Shu-Tian in \cite{Cai-Shu-Tian}.

Recall the choice of the ramification algebra $B$ and the ramification
set $S(\chi)$ given in Section \ref{section:Quaternion}.  By results
of Tunnel and Saito (\cite[Propositions 1.6 and 1.7]{Tunnell} and
\cite[Propositions 6.3 and  6.5]{Saito}) the space
$\text{Hom}_{K^{\times}}(\piJLg,\tchi)$ is
$1$-dimensional. 

\begin{definition}
A vector $v \in \piJLg$ is a called a
\emph{test vector} for $\widetilde{\chi}$ if
$\ell_{\widetilde{\chi}}(v) \neq 0$ for any nonzero
$\ell_{\widetilde{\chi}} \in
\text{Hom}_{K^{\times}}(\piJLg,\widetilde{\chi})$.
\end{definition}

\begin{prop}
Suppose that for every prime $q \neq p$ such that $q^2 \mid N$, $q$ is unramified in $K$.
  Let $\chi$ be a good character such that $\tchi$ is of conductor $cp^n$. Then the
  space ${\piJLg}^{{\g^{-1}_n}\U{\g_{n}}}$ is one dimensional. Moreover,
  every non-zero vector of it is a test vector for
  $\tchi$.
\end{prop}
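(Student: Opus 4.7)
The plan is to decompose $\piJLg = \bigotimes'_v \pi_v$ as a restricted tensor product of local representations and analyze $\pi_v^{U_v}$ place by place, where $U = \g_n^{-1}\U\g_n \subset \widehat{B}^{\times}$. For the dimension statement, I would verify that locally the invariant space is one-dimensional at every place; for the test vector property, I would invoke Tunnell--Saito to get a one-dimensional $\Hom_{K_v^{\times}}(\pi_v,\tchi_v)$ at each place and show that the local invariant vector pairs nontrivially, so by the non-vanishing criterion of Gross--Prasad and Cai--Shu--Tian the global invariant vector is a test vector.

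Place-by-place: at $v \nmid Np\infty$ the local component is unramified and $U_v$ is maximal (since $\g_n$ is the identity outside $p$ and $\widehat{\R}_v^\times$ is maximal), so the invariants form the one-dimensional spherical line, which is a local test vector by the standard calculation. At primes $q \neq p$ with $q \mid N$ the hypothesis ``$q$ unramified in $K$ when $q^2 \mid N$'' puts us squarely in the regime of \cite[Propositions 3.7 and 3.8]{Cai-Shu-Tian}: the local order $\R_q$ is Eichler (or maximal, in the ramified quaternion case) of the correct level, forcing a one-dimensional line of $\R_q^{\times}$-invariants which is a test vector for $\tchi_q$. At the archimedean place the claim is immediate in the definite case and follows from the standard lowest-weight analysis in the indefinite case, since $\tchi_\infty$ is trivial (anticyclotomic).

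The main obstacle is the local analysis at $p$. Here $\pi_{g,p}$ has conductor exactly $p$ and central character $\psi_p^{-2}$, while $\tchi_p$ has conductor $p^n$; the definition of $\U$ at $p$ imposes the nebentypus condition $\psi^2(a)=1$ on the lower-right entry mod $p$, and conjugation by $\g_n = \left(\begin{smallmatrix} p^{n-1} & 0 \\ 0 & 1\end{smallmatrix}\right)^{(p)}$ rescales this subgroup so that it matches the Atkin--Lehner/newform level structure appropriate for a vector paired against a character of conductor $p^n$. Concretely, I would write the standard model of $\pi_{g,p}$ (either a ramified principal series $\pi(\mu_1,\mu_2)$ with exactly one of the $\mu_i$ ramified, or a twist of Steinberg), compute the space of $\g_n^{-1}\U\g_n$-invariants in each case, and verify it is one-dimensional; then, using the explicit toric integral formula at $p$ (the calculation underlying \cite[Proposition 3.7]{Cai-Shu-Tian}, but for the good character regime where $p \notin S(\chi)$), check that the invariant vector has non-vanishing $\tchi_p$-toric period, thereby exhibiting it as a local test vector. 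Combining this with the previous places gives the global one-dimensionality and the test vector property simultaneously.
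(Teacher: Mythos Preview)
Your proposal is correct and follows essentially the same route as the paper: a local--global decomposition together with the local test vector analysis of Gross--Prasad and Cai--Shu--Tian. The paper's own proof is simply a one-line citation of \cite[Proposition 2.6]{Gross-Prasad} and \cite[Propositions 3.7 and 3.8]{Cai-Shu-Tian}; your write-up unpacks precisely the place-by-place content of those cited results (including the delicate point at $p$), so there is no substantive difference in strategy.
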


\begin{proof}
  The follows from  \cite[Proposition
  2.6]{Gross-Prasad} and \cite[Propositions 3.7 and
  3.8]{Cai-Shu-Tian}.
\end{proof}
\begin{remark}
  In the case when there are primes $q$ ramified in $K$ such that
  $q^2 \mid N$, the local space $(\piJLg)_q^{\R_q}$ has dimension $2$,
  but there is a canonical fixed line to consider, as explained in
  \cite[Remark 2.7]{Gross-Prasad}. For the general construction, we
  take an element in such line as the test vector $\JLg$. Note that this small technical issue is not important as we will only be varying the test vectors at the prime $p$ which is different from any such $q$.
\label{remark:unramified}
\end{remark}
For $n \ge 1$, consider the vector $\phi_{n}:=\g_n \cdot \JLg \in \piJLg$. 
\begin{lemma} The vector $\phi_n$ is a non-zero test vector for
  $\tchi$.  The complex conjugate of $\phi_{n}$ viewed as an
  element of $\piJLg^{\vee}$ is a non-zero test vector for
  ${\widetilde{\chi}}^{-1}$.
\end{lemma}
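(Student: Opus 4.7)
The plan is to reduce the lemma directly to the preceding proposition. That result guarantees that the fixed space $\piJLg^{\g_n^{-1}\U\g_n}$ is one-dimensional and consists entirely of test vectors for $\tchi$, so it suffices to exhibit $\phi_n$ as a non-zero element of this space.

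Non-vanishing is immediate: $\piJLg$ is a smooth representation of $\widehat{B}^{\times}$ on which every group element acts by an invertible operator, and $\JLg$ is non-zero by Theorem~\ref{thm:JL}, whence $\phi_n = \g_n \cdot \JLg \neq 0$. For the invariance, recall that $\JLg$ transforms under $\widehat{\R}^{\times}$ by the character $\psi^{-2}$, and that $\U$ was defined precisely so that $\psi^2$ is trivial on it (via the $(2,2)$-entry modulo $p$). Hence $\JLg$ is genuinely $\U$-fixed. A direct conjugation computation using the action rule $(r\cdot f)(x)=f(xr)$ then transfers this invariance: $\phi_n = \g_n\cdot\JLg$ becomes fixed by the conjugated subgroup $\g_n^{-1}\U\g_n$. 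Combining, $\phi_n$ is a non-zero element of the one-dimensional fixed space, hence a non-zero test vector for $\tchi$.

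For the contragredient statement, $\piJLg$ is unitary (e.g.\ with respect to the Petersson inner product), so complex conjugation gives a canonical anti-linear isomorphism $\piJLg \to \piJLg^{\vee}$ intertwining the $\widehat{B}^{\times}$-action. This identifies $\Hom_{K^{\times}}(\piJLg,\tchi)$ with $\Hom_{K^{\times}}(\piJLg^{\vee},\tchi^{-1})$, sending a generator $\ell_{\tchi}$ to an anti-linear companion $\bar{\ell}$ with $\bar{\ell}(\bar{v}) = \overline{\ell_{\tchi}(v)}$. Since $\ell_{\tchi}(\phi_n) \neq 0$, also $\bar{\ell}(\overline{\phi_n}) \neq 0$, so $\overline{\phi_n}$ is a non-zero test vector for $\tchi^{-1}$.

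The only real obstacle here is bookkeeping: pinning down the left/right action conventions so that the conjugation calculation falls out exactly as written in the proposition. All the serious representation-theoretic content---the one-dimensionality of the fixed space and the fact that every fixed vector is a test vector---has already been absorbed into the preceding proposition, so nothing further is required.
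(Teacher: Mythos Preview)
Your proof is correct and follows essentially the same approach as the paper, whose one-line argument (``The statement follows from the fact that $\JLg \in \piJLg$ is invariant under the action of $\U$'') is precisely the $\U$-invariance observation you spell out; you have simply added the routine details about non-vanishing, the conjugation calculation, and the contragredient case that the paper leaves implicit.
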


\begin{proof}
The statement follows from the fact that $\JLg \in \piJLg$ is invariant under the action of $\U$.
\end{proof}

Let $\Z$ be $\ZZ$ in the definite case and
$A_g(\overline{\QQ})$ in the indefinite one.  The projection of $P=[(z_0,1)]$ to the 
$\tchi$-isotypical component in $\Z$ is
given by
\[
  P_{\widetilde{\chi}}(\phi_{n}):= \sum_{\sigma \in \Gal(\widetilde{H}_{n}/K)} 
  \phi_{n}(P^\sigma)\widetilde{\chi}(\sigma)=\sum_{\sigma \in  \Gal(\widetilde{H}_{n}/K)} \JLg(\z_{n}^{\sigma}) \widetilde{\chi}(\sigma) \in (\Z \otimes \CC)^{\widetilde{\chi}}.
\]
Let $\Sigma_D$ be the set of places dividing both $N/p^2$ and the discriminant 
$D$ of $K$ and let $u_{n}:=\#{\On_{cp^n}}^{\times}/2$. Let $\left\langle -,- \right\rangle$ denote the natural pairing in
$\Z$, i.e.  multiplication in the definite case and the N\'eron-Tate
pairing in the indefinite one. We are now able to state the explicit version of Gross-Zagier and Waldspurger formulas.

\begin{thm}
  Let $\chi$ a good character, and let $cp^n$ be the conductor of
  $\widetilde{\chi}$. Then
\[L^{\epsilon,\{p\}}(1,\pi_g,\tchi):=\frac{2^{-\#{\Sigma_D}}8\pi^{2}
\left\langle g,\overline{g} 
\right\rangle_{U_{0}(N/p)}}{{u^2_{n}}\sqrt{D}cp^n}\frac{\left\langle 
P_{\widetilde{\chi}}(\phi_{n}),P_ { \widetilde { \chi
} ^ { -1 } }
(\overline{\phi_{n}})\right\rangle}{\left\langle \phi_{n}, \overline{\phi_{n}} 
\right\rangle_{{\delta^{-1}_n}\U{\delta_{n}}}}.
\]

\label{thm:GZ}
\end{thm}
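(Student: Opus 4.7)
The plan is to invoke the explicit Waldspurger formula (in the definite case, $\epsilon=0$) and the explicit Gross--Zagier formula (in the indefinite case, $\epsilon=1$) in the form developed by Cai--Shu--Tian in \cite{Cai-Shu-Tian}. Their main identities (Theorems 1.7 and 1.9 of loc.\ cit.) specialize the general formulas of Yuan--Zhang--Zhang \cite{MR3237437} to a product of local toric integrals, yielding, schematically,
\[
L^{\epsilon,\{p\}}(1,\pig,\tchi)=\frac{C_{\infty}\,\left\langle g,\overline{g}\right\rangle_{U_{0}(N/p)}}{\sqrt{D}}\cdot\prod_{v<\infty}\alpha_{v}(\phi_{n,v})\cdot\frac{\left\langle P_{\tchi}(\phi_{n}),P_{\tchi^{-1}}(\overline{\phi_{n}})\right\rangle}{\left\langle \phi_{n},\overline{\phi_{n}}\right\rangle_{\g_n^{-1}\U\g_n}},
\]
where $\alpha_{v}$ denote the normalized local toric periods. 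The task is then to check that the admissibility hypothesis of \cite{Cai-Shu-Tian} is satisfied by $\phi_n=\g_n\cdot \JLg$ and to evaluate the finite product $\prod_v\alpha_v$ explicitly.

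First I would verify admissibility: by Lemma \ref{lemma:admissible} the conjugated order $\R_n$ is admissible for $(\pig,cp^n,\iota)$, which matches the definition in \cite[Definition 1.3]{Cai-Shu-Tian}, and the lemma preceding this theorem shows that $\phi_n$ is a nonzero test vector for $\tchi$. Next I would compute the local factors. Away from primes dividing $Np^\infty D$ the factors $\alpha_v$ are trivial by new-vector uniqueness; at the archimedean place one obtains the familiar constant $8\pi^{2}$. At primes $q\neq p$ with $q^{2}\mid N$ but $q\nmid D$, local uniqueness (Propositions 3.7--3.8 of \cite{Cai-Shu-Tian}) gives $\alpha_q=1$, while for $q\in\Sigma_{D}$ the local component is two-dimensional and the canonical choice of line (Remark \ref{remark:unramified}) contributes a factor $\tfrac12$, accounting for the global factor $2^{-\#\Sigma_D}$.

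The main obstacle is the local computation at $p$, where the local representation is non-semistable and the test vector $\phi_n$ sits in the translated space $\g_n^{-1}\U\g_n$. The plan is to conjugate by $\g_n$, reducing the problem to a matrix-coefficient computation against $\JLg$ on the standard Eichler order $\R_p$, but with the character $\tchi_p$ replaced by its translate. The contribution $1/(cp^n\,u_n^2)$ then arises from the combination of the change-of-volume factor between $\hat{\R}^\times$ and $\g_n^{-1}\U\g_n$ at $p$, the class number/index contribution of $\widehat{\On}_{cp^n}^\times$ inside $\widehat{K}^\times$ modulo $\QQ^\times$ (captured by $cp^n/u_n^2$, with $\sqrt{D}$ absorbed in the discriminant factor of the global formula), and the fact that $\tchi$ is good of conductor $cp^n$, so the local toric integral at $p$ reduces to a single non-vanishing term. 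Assembling all these local pieces in Cai--Shu--Tian's identity yields the stated formula.
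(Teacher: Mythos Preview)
Your approach is correct and matches the paper's: both invoke the explicit formulas of Cai--Shu--Tian \cite{Cai-Shu-Tian} (the paper cites Theorems~1.5 and~1.8 rather than~1.7 and~1.9). The paper's proof is in fact nothing more than this direct citation, so the stated identity is taken verbatim as an instance of their result; your sketch of the local-factor computations (at $\infty$, at primes in $\Sigma_D$, and at $p$) is effectively re-deriving what Cai--Shu--Tian already established and is therefore unnecessary here.
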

\begin{proof}
  See \cite{Cai-Shu-Tian} Theorem 1.8 for $\epsilon=0$ and Theorem 1.5
  for $\epsilon=1$.
\end{proof}
\section{Anticyclotomic p-adic L-function}
\label{section:anti}
The $p$-adic $L$-function is a functional on locally constant
functions attached to a $p$-adic measure $\mu_E$, i.e. if $h$ is a
locally constant function, $\L_p(h)=\int h d\mu_E$. We will construct it using the $CM$
points we defined. Once the $p$-adic
$L$-function is defined, we will use the results of the previous
sections to relate its values at characters $\chi$ with the 
values  $L^\epsilon(\pi_E,\chi,1)$.

A crucial hypothesis in the classical constructions is that $\pi_E$
has an eigenvalue for the $U_{p}$ operator with \emph{small}
slope. Since $E$ has additive reduction at $p$ its unique eigenvalue
for $U_p$ is $0$. However, under our working assumptions $E$ has SRAE
at $p$ so we can bypass this considering the abelian variety $A_g$.
Let $\alpha$ be the eigenvalue of the $U_p$-operator acting on
$\JLg$. If $f$ is Steinberg at $p$, $\alpha= \pm 1$; otherwise the
coefficient field $M$ of $\JLg$ is a quadratic extension of $\QQ$
(either $\QQ(i)$ or $\QQ(\sqrt{-3})$) in which $p$ splits
(\cite[Section 2.1]{Kohen}), so there exists a prime $\id{p} \mid p$
such that $\id{p} \nmid \alpha$.  Then
$\alpha \in \mathcal{O}^{\times}_{M_{\id{p}}}\cong
{\ZZ^{\times}_{p}}$. Since the space of modular forms has an integral
basis and the modular form $g$ has eigenvalues lying in $\ZZ_p$, we
can always normalize $\JLg$ such that the images of the CM points lie
in $\Z_{p}:= \Z \otimes \ZZ_p$.
\begin{definition}
  For $n \ge 1$ the \emph{regularized} CM points on $\Z_p$ are
\[ 
{\zeta^{\sigma}_{n}}:=
\JLg(\z^{\sigma}_{n}) \alpha^{-n}.
\]
\end{definition}
\begin{prop}[Distribution relation] If $n \ge 1$, the regularized CM points satisfy the relation
 \label{prop:compatibility}
 \[\sum_{\sigma \in 
\Gal(\widetilde{H}_{{n+1}}/\widetilde{H}_{n})} \zeta^{\sigma}_{n+1}=\zeta_{n}. \]
\end{prop}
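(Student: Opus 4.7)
The plan is to combine the Galois/Hecke compatibility relation of Proposition~\ref{prop:galois}(2) with the fact that $g_B$ is a $U_p$-eigenvector with eigenvalue $\alpha$, and then divide by the appropriate power of $\alpha$.

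First I would recall that $g_B$, viewed either as a function on $X$ (definite case) or as an element of $\Hom(J,A_g)\otimes\QQ$ (indefinite case), is an eigenform for the Hecke algebra arising from the Jacquet--Langlands transfer of $\pi_g$. In particular, because $\pi_g$ has $v_p(\cond(\pi_g))=1$ and $g_B$ is fixed by $\mathscr{U}$, the operator $U_p$ acts on $g_B$ with the same eigenvalue $\alpha$ as on the original form $g$; i.e., for any divisor $D$ of CM points one has $g_B(U_p(D))=\alpha\cdot g_B(D)$. Applied to $D=\mathfrak{z}_n$, this gives
\[
g_B\!\left(U_p(\mathfrak{z}_n)\right)=\alpha\,g_B(\mathfrak{z}_n).
\]

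Next I would invoke Proposition~\ref{prop:galois}(2), which states
\[
\sum_{\sigma\in\Gal(\widetilde{H}_{n+1}/\widetilde{H}_n)}\mathfrak{z}_{n+1}^{\sigma}=U_p(\mathfrak{z}_n).
\]
Applying $g_B$ to both sides (which is linear on divisors) and combining with the eigenvalue relation above yields
\[
\sum_{\sigma\in\Gal(\widetilde{H}_{n+1}/\widetilde{H}_n)} g_B(\mathfrak{z}_{n+1}^{\sigma})=\alpha\cdot g_B(\mathfrak{z}_n).
\]
Finally, multiplying by $\alpha^{-(n+1)}$ gives
\[
\sum_{\sigma\in\Gal(\widetilde{H}_{n+1}/\widetilde{H}_n)}\zeta_{n+1}^{\sigma}
=\sum_\sigma \alpha^{-(n+1)}g_B(\mathfrak{z}_{n+1}^{\sigma})
=\alpha^{-n}g_B(\mathfrak{z}_n)=\zeta_n,
\]
which is the claimed distribution relation.

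There is no substantial obstacle here: the argument is formal once one has Proposition~\ref{prop:galois}(2) and the $U_p$-eigenvalue property. The only thing worth being careful about is that $\alpha$ is a $p$-adic unit (so that $\alpha^{-(n+1)}$ makes sense in $\mathcal{Z}_p$), which was established in the paragraph preceding the definition of the regularized CM points: if $\pi_g$ is Steinberg at $p$ then $\alpha=\pm1$, and otherwise $p$ splits in the coefficient field and we fix a prime $\mathfrak{p}\mid p$ with $\mathfrak{p}\nmid\alpha$. The computation works uniformly in the definite and indefinite cases since in both settings $g_B$ is linear and $U_p$-equivariant in the appropriate sense.
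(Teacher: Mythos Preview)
Your proof is correct and is exactly the unpacking of what the paper records in one line (``This is an immediate consequence of Proposition~\ref{prop:galois}''): apply $g_B$ to the identity $\sum_\sigma \mathfrak{z}_{n+1}^\sigma = U_p(\mathfrak{z}_n)$, use that $g_B$ is a $U_p$-eigenvector with eigenvalue $\alpha$, and divide by $\alpha^{n+1}$. There is no difference in approach.
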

\begin{proof}
This is an immediate consequence of Proposition \ref{prop:galois}.
\end{proof}
 For $n \ge 1$ let
\begin{equation}
\widetilde{\theta}_{n}:= \sum_{\sigma \in \Gal(\widetilde{H}_{n}/K)} {\zeta^{\sigma}_{n}} \sigma \in 
\Z_{p}[\widetilde{G}_{n}].  
\label{thetantildedef}
\end{equation}
The compatibility relation allows to attach a $p$-adic measure to
$\JLg$, since it gives a well defined element
\[\widetilde{\theta}:=\varprojlim_{n} \widetilde{\theta}_{n} \in
\Z_{p}[[\widetilde{G}_{\infty}]]. \]
Its twisted version (that will give rise to the $p$-adic $L$-function of $\pi_E$) is defined by 
\begin{equation}
{\theta}_{n}:= \sum_{\sigma \in \Gal(\widetilde{H}_{n}/K)} \psi(\sigma){\zeta^{\sigma}_{n}}\sigma \in 
\Z_{p}[\widetilde{G}_{n}],  
\label{thetandef}
\end{equation}
where by class field theory, we can think of $\psi$ as a character of
$\Gal(\overline{\QQ}/\QQ)$ factoring through $\widetilde{H}_{n}$. It is clear from the definition
that $\psi$ is compatible with the natural map
$\widetilde{G}_{n+1} \rightarrow \widetilde{G}_n$ hence we also get a well
defined object
\[
{\theta}:=\varprojlim_{n} {\theta}_{n} \in
\Z_{p}[[\widetilde{G}_{\infty}]]. 
\]
Let $\mu_{\JLg,\alpha}$ (respectively $\mu_E$) denote the measure on
$\widetilde{G}_{\infty}$ attached to $\widetilde{\theta}$
(resp. $\theta$). Note that for $\sigma \in \widetilde{G}_\infty$, the
two measures satisfy that
$$\psi(\sigma)\mu_{\JLg,\alpha}(\sigma)=\mu_E(\sigma).$$
If $\chi$ is a good character such that $\tchi$ is
of conductor $cp^n$, then
\[
\widetilde{\chi}(\widetilde{\theta})=\int_{\widetilde{G}_{\infty}}
\widetilde{\chi}(g)d\mu_{\JLg,\alpha}(g).
\]
The character $\widetilde{\chi}$ factors through $\Gal(\widetilde{H}_{n}/K)$ so the integral equals the finite sum
\[ 
\widetilde{\chi}(\widetilde{\theta})=\sum_{\sigma \in \Gal(\widetilde{H}_{n}/K)} 
\widetilde{\chi}(\sigma){\zeta^{\sigma}_{n}}.  
\]
Looking at the definitions of $\tchi$ and
 $\widetilde{\theta}$ it is clear that
$ \widetilde{\chi}(\widetilde{\theta})= \chi(\theta)$. In particular, a
similar formula holds for $\chi(\theta)$.

One should think of $\theta$ as the square root of the $p$-adic
$L$-function. More precisely, let $*$ be the involution sending
$\sigma$ to $\sigma^{-1}$ and let
 \[ 
\L_{n}:= \theta_{n} \otimes \theta^{*}_{n} \in 
 (\Z_{p}\otimes \Z_p)[\widetilde{G}_{n}].
\]
\begin{definition}
  The $p$-adic L-function attached to $\pi_E$ is
\[
\L:=\varprojlim_{n}\mathscr{L}_{n} \in 
 (\Z_{p}\otimes \Z_p)[[\widetilde{G}_{\infty}]].
\]
\end{definition}

\begin{remark}
If we change our compatible sequence of CM points $\left\lbrace \z_n \right\rbrace_{n \ge 1}$ for another compatible sequence $\left\lbrace \z'_n \right\rbrace_{n \ge 1}$ there must exist an element $\sigma_0 \in \widetilde{G}_{\infty}$ such that for every $n \ge 1$,
$\z^{\sigma_0}_n=\z'_n$. Let  $\theta'$ be the corresponding element associated to $\left\lbrace \z'_n \right\rbrace_{n \ge 1}$. Then we have that 
\[\theta'= \psi(\sigma^{-1}) \sigma^{-1}_0 \cdot \theta. \]
Similarly, working with $\theta^{*}$ and $\theta'^{*}$ we obtain

\[\theta'^{*}= \psi(\sigma^{-1}) \sigma_0 \cdot \theta^{*}. \]

Putting these two equations together we get

\[ \L' = \psi(\sigma^{-2}_0) \L.\]

Thus $\L$ is more intrinsic that $\theta$, as it depends very mildly on the sequence of compatible CM points. The reader should compare this with \cite[Remark 1, p.12]{B-D2}.
\end{remark}
Once we fix an embedding of $\ZZ_{p}$ into
$\CC$, the pairing $\left\langle -,- \right\rangle$ induces
\[
  \left\langle -,- \right\rangle : \Z_p \otimes \Z_p \to\CC,
\]
and we let  $\L_{\CC} \in \CC[[\widetilde{G}_{\infty}]]$ be the image of
$\mathscr{L}$ under such pairing. 
\begin{prop}
\label{interpol}
Let $\chi$ be a good character such that $\widetilde{\chi}$ has conductor $cp^n$. Then
\[
  \chi(\mathscr{L}_{\CC})= \sum_{\tau_1,\tau_2 \in \widetilde{G}_n}\psi(\tau_1 \tau_2)\<\zeta_n^{\tau_1},\zeta_n^{\tau_2}>\chi(\tau_1\tau^{-1}_2).
 \]
\end{prop}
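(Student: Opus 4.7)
The proof is a direct unwinding of the definitions. Applying the involution $*$ to the expression (\ref{thetandef}) gives
\[
\theta_n^{*} = \sum_{\tau \in \widetilde{G}_n} \psi(\tau)\,\zeta_n^{\tau}\,\tau^{-1}.
\]
Interpreting $\mathscr{L}_n = \theta_n \otimes \theta_n^{*}$ as an element of $(\Z_p \otimes \Z_p)[\widetilde{G}_n]$ by multiplying the group-like parts in $\widetilde{G}_n$ while tensoring the scalar coefficients (noting that the values of $\psi$ are roots of unity lying in $\ZZ_p^\times$ and hence pull out of the tensor as scalars), I obtain
\[
\mathscr{L}_n = \sum_{\tau_1,\tau_2 \in \widetilde{G}_n} \psi(\tau_1\tau_2)\,(\zeta_n^{\tau_1}\otimes \zeta_n^{\tau_2})\,\tau_1\tau_2^{-1}.
\]

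Next, I observe that $\chi$ factors through $G_n$, a quotient of $\widetilde{G}_n$, so it defines a character of $\widetilde{G}_n$; combined with the compatibility of the inverse system defining $\mathscr{L}$, this gives $\chi(\mathscr{L}) = \chi(\mathscr{L}_n)$. Extending $\chi$ linearly to the group ring yields
\[
\chi(\mathscr{L}_n) = \sum_{\tau_1,\tau_2 \in \widetilde{G}_n} \psi(\tau_1\tau_2)\,(\zeta_n^{\tau_1}\otimes \zeta_n^{\tau_2})\,\chi(\tau_1\tau_2^{-1}) \in \Z_p \otimes \Z_p.
\]
Finally, applying the pairing $\<-,->\colon \Z_p\otimes\Z_p\to\CC$, which sends simple tensors $a\otimes b$ to $\<a,b>$, produces the stated formula for $\chi(\mathscr{L}_\CC)$.

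The argument is purely formal, so there is no serious obstacle. The only care needed is (i) the interpretation of the tensor product in $(\Z_p\otimes\Z_p)[\widetilde{G}_n]$ as multiplying in the group algebra while tensoring coefficients, and (ii) the compatibility of $\chi$, $\psi$, and the pairing with the projective limit defining $\mathscr{L}$. Both points are routine consequences of the definitions given just above the proposition.
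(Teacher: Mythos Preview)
Your proof is correct and follows essentially the same approach as the paper: both argue that $\chi(\mathscr{L}_{\CC})=\chi(\langle\theta_n,\theta_n^*\rangle)$ and then expand $\theta_n$ and $\theta_n^*$ term by term. The paper compresses this into one line, while you have written out the intermediate expression for $\mathscr{L}_n$ and made explicit the two routine points about the tensor structure and the factorization of $\chi$ through $\widetilde{G}_n$.
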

\begin{proof}
By definition, $\chi(\L_{\CC})= \chi(\<\theta_n,\theta_n^*>)$. The result follows immediately replacing $\theta_n$ and $\theta_n^*$ by their definitions (\ref{thetandef}) and (\ref{thetantildedef}).
\end{proof}
We are now ready to prove the main result of this article.

\begin{thm}[Interpolation]
There exists a constant ${\Omega'_E}$ that depends on $E$ such that for every 
 good character $\chi$ for which $\widetilde{\chi}$ has conductor $cp^n$, the following holds:
\[
 \chi(\mathscr{L}_{\CC})=\frac{p^n}{{\alpha}^{2n}} \cdot 
\frac{L^{\epsilon}(1,E,\chi)}{\Omega'_E} \cdot 
\frac{u^2_{n}\sqrt{D}c}{2^{-\#{\Sigma_D}}}.
  \]

\label{thm:padilinterpolation}
\end{thm}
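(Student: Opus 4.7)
The plan is to match the double sum in Proposition~\ref{interpol} with the quaternionic pairing appearing in the explicit Gross--Zagier/Waldspurger formula (Theorem~\ref{thm:GZ}), translate the resulting central value of $\pig$ at $\tchi$ into the value of $\pi_E$ at $\chi$, and package the remaining constants into an $n$-independent period $\Omega'_E$. The same argument will work verbatim in the definite ($\epsilon=0$) and indefinite ($\epsilon=1$) cases, since the two scenarios are handled by the unified formulation of Theorem~\ref{thm:GZ}.

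Starting from Proposition~\ref{interpol}, I would substitute $\zeta_n^\sigma=\alpha^{-n}\JLg(\z_n^\sigma)$ to pull out a factor $\alpha^{-2n}$, and then use the identity $\tchi=\chi\cdot(\psi\circ\Nm_{K/\QQ})$, together with the fact that $\psi$ viewed via class field theory as a Galois character on $\Gal(\overline\QQ/K)$ corresponds to $\psi\circ\Nm_{K/\QQ}$, to rewrite the sum in terms of $\tchi$. Using that $\phi_n=\g_n\cdot\JLg$ satisfies $\phi_n(P^\sigma)=\JLg(\z_n^\sigma)$, and the analogous identity for $\overline{\phi_n}\in\piJLg^\vee$, the double sum is recognised as $\<P_{\tchi}(\phi_n),\,P_{\tchi^{-1}}(\overline{\phi_n})>$. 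Theorem~\ref{thm:GZ} then replaces this pairing by
\[
\frac{u_n^2\sqrt{D}\,cp^n}{2^{-\#\Sigma_D}\cdot 8\pi^2\<g,\overline g>_{U_0(N/p)}}\cdot\<\phi_n,\overline{\phi_n}>_{\g_n^{-1}\U\g_n}\cdot L^{\epsilon,\{p\}}(1,\pig,\tchi).
\]

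The $n$-dependence of the denominator is removed by the standard invariance of the Petersson inner product under the right action of $\widehat{B}^\times$: since $\phi_n=\g_n\cdot\JLg$, one has $\<\phi_n,\overline{\phi_n}>_{\g_n^{-1}\U\g_n}=\<\JLg,\overline{\JLg}>_\U$. This allows me to set
\[
\Omega'_E:=\frac{8\pi^2\<g,\overline g>_{U_0(N/p)}}{\<\JLg,\overline{\JLg}>_\U},
\]
which depends only on $E$. The relation $L(\pi_E,\chi,s)=L^{(\{p\})}(\pig,\tchi,s)$ extends to the derivative at $s=1$ since the removed local factor at $p$ equals $1$ (as $E$ has additive reduction at $p$), giving $L^{\epsilon,\{p\}}(1,\pig,\tchi)=L^\epsilon(E,\chi,1)$; combining the factor $\alpha^{-2n}$ with $p^n$ and the remaining constants then yields the stated formula.

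The main obstacle is the matching carried out in the first step. A naive expansion of $\<P_{\tchi}(\phi_n),P_{\tchi^{-1}}(\overline{\phi_n})>$ produces the character weight $\chi(\tau_1)\chi(\tau_2)^{-1}\psi(\tau_1)\psi(\tau_2)^{-1}$, whereas Proposition~\ref{interpol} produces the weight $\chi(\tau_1)\chi(\tau_2)^{-1}\psi(\tau_1)\psi(\tau_2)$, a discrepancy of $\psi(\tau_2)^2$. Reconciling the two relies on the nontrivial central characters involved: $\JLg$ has central character $\psi^{-2}$ while its dual $\overline{\JLg}\in\piJLg^\vee$ has central character $\psi^{2}$, so once the correct normalisation of the dual test vector is fixed, the evaluations $\overline{\JLg}(\z_n^{\tau_2})$ and $\JLg(\z_n^{\tau_2})$ differ precisely by the required factor $\psi(\tau_2)^2$. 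Carrying out this nebentypus book-keeping carefully — which is essentially the reason the construction works at additive reduction primes — is the technical heart of the proof; once it is settled, the remaining steps reduce to collecting constants.
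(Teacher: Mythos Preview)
Your proposal is correct and follows essentially the same route as the paper: expand Proposition~\ref{interpol}, match the double sum with $\langle P_{\tchi}(\phi_n),P_{\tchi^{-1}}(\overline{\phi_n})\rangle$, invoke Theorem~\ref{thm:GZ}, use the $n$-independence of the Petersson norm, and convert $L^{\epsilon,\{p\}}(1,\pig,\tchi)$ into $L^\epsilon(1,E,\chi)$. The paper handles the $\psi(\tau_2)^2$ discrepancy you isolate not via a ``central character'' argument but via the cleaner observation that $\pi_g\otimes\psi^2=\pi_{\overline g}$, hence $\JLg\otimes\psi^2=\overline{g}_B$; this twist identity (rather than the Nebentypus of the dual vector alone) is what converts $\overline{\JLg}(\z_n^{\tau_2})$ into $\psi^2(\tau_2)\JLg(\z_n^{\tau_2})$ up to an $n$-dependent but $\tau$-independent scalar absorbed into $\Omega'_E$, so you may want to phrase the book-keeping that way.
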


\begin{proof}
At the level of modular forms we have that
$\pi_{g} \otimes \psi^2 = \pi_{\overline{g}}$. This induces the same relation under the Jacquet-Langlands transfer and we obtain that $\JLg \otimes \psi^2=\overline{g}_B$. Using the definition of $P_{\widetilde{\chi}}$ and the fact that
  $\widetilde\chi = \psi \chi$, we can write the last factor of the main
  formula of Theorem~\ref{thm:GZ} as
\[
  \<P_{\widetilde{\chi}}(\phi_{n}),P_ { \widetilde { \chi}^ { -1 }
  }(\overline{\phi_{n}})>= \sum_{\tau_{1}, \tau_2 \in \widetilde{G}_{n}}
  \psi({\tau_1} {\tau_2}) \chi(\tau_1{\tau^{-1}_2}) \left\langle
   \JLg(\z^{\tau_1}_n), \JLg(\z^{ {\tau_2} }_n)
  \right\rangle .\]
  
  Since $\JLg(\z^{\tau_1}_n) \alpha^{-n}=\zeta_n^{\tau_1}$ and
$L^{(p)}(1,\pi_{\JLg},\widetilde{\chi})=L(1,E,\chi)$ we obtain the desired result 
using Proposition \ref{interpol} and the fact that ${\left\langle \phi_{n}, 
\overline{\phi_{n}} 
\right\rangle_{{\delta^{-1}_n}\U{\delta_{n}}}}$ does not depend on $n$.
\end{proof}

\section{The good reduction twist case}\label{section:remarks}

In the case when $\pi_g$ has good reduction at $p$, i.e. $E$ is a
quadratic twist of a curve with good reduction at $p$, the previous
construction and results hold with some minor modifications. We focus
in the case when the twisted curve is ordinary at $p$, to follow the
classical construction. In the supersingular case, the same approach
works (with the additional assumption that $p$ splits in $K$), but instead of following the classical construction, one
follows the one done by Pollack in \cite{Pollack}.

The choice of level $\R=\U$ is the same, but it will be
maximal at $p$. Moreover, we can change the embedding $\iota$ in such a way that the CM point $P=\z_0=[z_0,1]$ is of conductor $c$
and $\z_n:= \g_n \cdot P$ are of conductor $cp^n$
in a similar way as we did in Lemma \ref{lemma:admissible}.  The distribution relations are the following (see for example \cite[p.433]{B-D1}):

\begin{itemize}
\item If $n \ge 1$, $\sum_{\sigma \in 
\Gal({H}_{{n+1}}/{H}_{n})} \z_{n+1}^{\sigma}= U_{p}\z_{n}-\z_{n-1}$.
\item If $n=0$,
 \[ u_{0} \cdot \sum_{\sigma \in 
\Gal({H}_{{1}}/{H}_{0})} \z_{1}^{\sigma}=\begin{cases}
(U_p-\sigma_{\mathfrak{p_{1}}}-\sigma_{\mathfrak{p_{2}}})  \z_{0} &\text{if $p$ is split in $K$},\\
(U_p-\sigma_{\mathfrak{p_{1}}}) \z_0 &\text{if $p$ is ramified in $K$} \\
U_p \z_0 &\text{if $p$ is inert in $K$},
\end{cases}   
\]
where $\sigma_{\mathfrak{p_{i}}}$ are the Frobenii of the primes above $p$ in $K$.
\end{itemize}
If $\alpha$ denotes the $p$-adic unit root of the Frobenius polynomial at $p$, the normalized CM points are defined by

\[
{\zeta_{n}}^{\sigma}:=
\begin{cases}
(\alpha \JLg(\z_{n}^{\sigma})-\JLg(\z_{n-1}^{\sigma}))\cdot \alpha^{-n-1} &\text{if }n \ge 1,\\
{u^{-1}_{0}}(1-(\sigma_{\mathfrak{p_{1}}}+\sigma_{\mathfrak{p_{2}}})\alpha^{-1}+\alpha^{-2})\JLg(\z_{0}^{\sigma})&\text{if $n=0$ and $p$ splits in $K$,}\\
{u^{-1}_{0}}(1-\sigma_{\mathfrak{p_{1}}}\alpha^{-1})\JLg(\z_{0}^{\sigma})&\text{if $n=0$ and $p$ is ramified in $K$,}\\
{u^{-1}_{0}}(1-\alpha^{-2})\JLg(\z_{0}^{\sigma})&\text{if $n=0$ and $p$ is inert in $K$.} \\
\end{cases}  
\]

The definition of the theta element and the $p$-adic $L$-function
is the same. If we take $\chi$ such that $\widetilde{\chi}$ has
conductor $cp^n$ (with $n \in \NN \cup \{0\}$), we can evaluate
$\chi(\L_{\CC})$ as we did before. 

\begin{thm}\label{thm:interp}
There exists a constant ${\Omega'_E}$ that depends on $E$ such that for every 
character $\chi$ such that $\widetilde{\chi}$ has conductor $cp^n$ the following holds:
\[
 \chi(\mathscr{L}_{\CC})= \frac{p^n}{{\alpha}^{2n}} \cdot 
\frac{{e_{p}(\tchi)}^2}{L_p(\pi_g,\tchi,1)} \cdot 
\frac{L^{\epsilon}(1,E,\chi)}{\Omega'_E} \cdot 
\frac{u^2_{n}c\sqrt{D}}{2^{-\#{\Sigma_D}}},
 \]
 
where the $p$-adic multiplier is given by

\[ e_p(\tchi):=
\begin{cases}
1 &\text{if $n \ge 1$.}\\
(1-\tchi(\sigma_{\mathfrak{p_{1}}})\alpha^{-1})(1-\tchi(\sigma_{\mathfrak{p_{2}}
} )\alpha^{-1})&\text{if $n=0$ and $p$ splits in $K$.} \\
(1-\tchi(\sigma_{\mathfrak{p_{1}}})\alpha^{-1})&\text{if $n=0$ and $p$ is 
ramified in $K$.} \\
(1-\alpha^{-2})&\text{if $n=0$ and $p$ is inert in $K$.} \\
\end{cases}.  
\]

\end{thm}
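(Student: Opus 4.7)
The plan is to mirror the proof of Theorem \ref{thm:padilinterpolation}, splitting into the two regimes $n \ge 1$ and $n = 0$: the former reduces almost directly to the SRAE proof, while the latter produces the extra factor $e_p(\widetilde{\chi})^2/L_p(\pi_g,\widetilde{\chi},1)$ and requires genuinely new bookkeeping.

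For $n \ge 1$, the normalized CM point $\zeta_n^\sigma = (\alpha\JLg(\z_n^\sigma) - \JLg(\z_{n-1}^\sigma))\alpha^{-n-1}$ carries a lower-level correction $-\JLg(\z_{n-1}^\sigma)\alpha^{-n-1}$. Since $\widetilde{\chi}$ has conductor $cp^n$ with $n \ge 1$ and $\z_{n-1}$ is defined over $\widetilde{H}_{n-1}$, the partial sum of $\widetilde{\chi}(\sigma)\JLg(\z_{n-1}^\sigma)$ over $\sigma \in \Gal(\widetilde{H}_n/K)$ vanishes by orthogonality of characters on the fibers $\Gal(\widetilde{H}_n/\widetilde{H}_{n-1})$. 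Only the term $\alpha\JLg(\z_n^\sigma)\alpha^{-n-1} = \JLg(\z_n^\sigma)\alpha^{-n}$ survives, and the computation of $\chi(\L_\CC)$ proceeds exactly as in Theorem \ref{thm:padilinterpolation}: use Proposition \ref{interpol}, apply the Waldspurger/Gross-Zagier formula to the translated test vector $\phi_n = \g_n \cdot \JLg$, and note that $\widetilde{\chi}$ being ramified at $p$ forces $L_p(\pi_g,\widetilde{\chi},1) = 1$, consistent with $e_p(\widetilde{\chi}) = 1$.

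For $n = 0$, the character $\widetilde{\chi}$ is unramified at $p$ and the appropriate test vector is the spherical form $\JLg$ itself. First invoke the Waldspurger/Gross-Zagier formula from \cite{Cai-Shu-Tian} with this spherical test vector to express $L^{\{p\}}(1,\pi_g,\widetilde{\chi})$ in terms of $\langle P_{\widetilde{\chi}}(\JLg), P_{\widetilde{\chi}^{-1}}(\overline{\JLg})\rangle$. Next, the normalization appearing in $\zeta_0^\sigma$ is an operator-valued polynomial in the Frobenii $\sigma_{\mathfrak{p}_i}$ acting on $\JLg(\z_0^\sigma)$; summing $\widetilde{\chi}(\sigma)\zeta_0^\sigma$ over $\sigma$ converts this operator into the scalar $u_0^{-1}e_p(\widetilde{\chi})$ (in all three splitting behaviors), and an analogous computation handles $\theta_0^*$. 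The product of the two scalar contributions, together with the local Euler factor at $p$ missing from $L^{\{p\}}$, reconstructs precisely the correction $e_p(\widetilde{\chi})^2/L_p(\pi_g,\widetilde{\chi},1)$ in the statement.

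The main obstacle is the delicate bookkeeping at $n = 0$: one must verify in each of the split, inert, and ramified cases that the combined normalization from $\theta_0$ and $\theta_0^*$, together with the Euler factor missing from $L^{\{p\}}$, equals exactly $e_p(\widetilde{\chi})^2/L_p(\pi_g,\widetilde{\chi},1)$. This relies on the relation $\widetilde{\chi}(\sigma_{\mathfrak{p}_1})\widetilde{\chi}(\sigma_{\mathfrak{p}_2}) = \psi^2(p)$ in the split case (and its analogues in the ramified and inert cases), the factorization of the local Euler factor in terms of $\alpha$ and $p/\alpha$, and the compatibility of the CM pairing with complex conjugation under the anticyclotomic condition. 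Once these local contributions match, the global constants ($u_0^2$, $\sqrt{D}c/2^{-\#\Sigma_D}$, and the period $\Omega'_E$) follow as in the previous theorem.
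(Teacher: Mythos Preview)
Your proposal is correct and follows essentially the same approach as the paper. For $n \ge 1$ you use orthogonality to kill the terms involving $\JLg(\z_{n-1}^\sigma)$, exactly as the paper does (phrasing it as ``the conductors of the test vector and the character do not agree''), and for $n = 0$ you unpack in detail what the paper compresses into a single sentence about the $p$-th Euler factor supplying the extra local term; your more explicit bookkeeping of how the normalization of $\zeta_0^\sigma$ produces the scalar $u_0^{-1}e_p(\widetilde{\chi})$ in each splitting case is a welcome elaboration of the paper's sketch.
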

\begin{proof}
  When $n \ge 1$, if we expand the four terms in the pairing
  $\left\langle - , - \right\rangle$ we obtain that the only term that
  survives is the same as we had in the general case. The reason is
  that all the other terms involve a sum of the form
  $\sum_{\sigma} \tchi(\sigma) g_B(\z_{n-1}^{\sigma})$ which equals
  zero as the conductors of the test vector and the character do not
  agree. Finally, both Waldspurger and Gross-Zagier formulas
  (Theorem~\ref{thm:GZ}) for central values of the elliptic curve
  $E_{\pig}$ (or its derivative) need to include the $p$-th Euler
  factor at $p$ (which is trivial if $n \ge 1$ and non-vanishing in
  general) which gives the extra local factor to the formula.
\end{proof}

\bibliographystyle{alpha}
\bibliography{bibliography}
\end{document}